\numberwithin{equation}{section}
\newtheorem{thm}{Th\'eor\`eme}
\newtheorem{cor}[thm]{Corollaire}
\newtheorem{lem}[thm]{Lemme}
\newtheorem{pro}[thm]{Proposition}
\theoremstyle{definition}
\newtheorem{defi}[thm]{D\'efinition}
\theoremstyle{remark}
\newtheorem*{rem}{Remarque}
\newcommand{\asd}[5]{%
\setbox1=\hbox{\ensuremath{^{#1}}}%
\setbox2=\hbox{\ensuremath{_{#2}}}%
\setbox5=\hbox{\ensuremath{#5}}%
\hspace{\ifnum\wd1>\wd2\wd1\else\wd2\fi}%
\ensuremath{\copy5^{\hspace{-\wd1}\hspace{-\wd5}#1\hspace{\wd5}#3}%
_{\hspace{-\wd2}\hspace{-\wd5}#2\hspace{\wd5}#4}%
}}
\DeclareSymbolFont{cyrletters}{OT2}{wncyr}{m}{n}
\DeclareMathSymbol{\Sha}{\mathalpha}{cyrletters}{"58}
\DeclareMathSymbol{\Brusse}{\mathalpha}{cyrletters}{"42}
\newcommand{\n}{\mathbb{N}}
\newcommand{\z}{\mathbb{Z}}
\newcommand{\q}{\mathbb{Q}}
\renewcommand{\ker}{\mathrm{Ker }}
\newcommand{\im}{\mathrm{Im}}
\renewcommand{\hom}{\mathrm{Hom}}
\newcommand{\aut}{\mathrm{Aut}}
\newcommand{\out}{\mathrm{Out}}
\renewcommand{\int}{\mathrm{Int}}
\newcommand{\saut}{\mathrm{SAut}}
\newcommand{\nr}{\mathrm{nr}}
\newcommand{\al}{\mathrm{al}}
\newcommand{\brnral}{\mathrm{Br}_{\nr,\al}}
\newcommand{\gal}{\mathrm{Gal}}
\newcommand{\sln}{\mathrm{SL}_n}
\title{Approximation faible et principe de Hasse pour des espaces homog\`enes \`a stabilisateur fini r\'esoluble}
\author{Giancarlo Lucchini Arteche\\[5mm]
{\it\small D\'epartement de math\'ematiques, universit\'e Paris-Sud}\\
{\it\small b\^atiment 425, 91405 Orsay cedex, France}\\
{\small giancarlo.lucchini@math.u-psud.fr}
}
\date{}
\begin{document}

\selectlanguage{french}

\maketitle

\selectlanguage{english}
\begin{abstract}
{\bf Weak approximation and Hasse principle for homogeneous spaces with finite solvable stabilizer.} Let $K$ be a global field and $G$ a finite solvable $K$-group.
Under certain hypotheses concerning the extension splitting $G$, we show that the homogeneous space $V=G'/G$ with $G'$ a semi-simple simply connected $K$-group
has the weak approximation property. We use a more precise version of this result to prove the Hasse principle for homogeneous spaces $X$ under a semi-simple
simply connected $K$-group $G'$ with finite solvable geometric stabilizer $\bar G$, under certain hypotheses concerning the $K$-kernel (or $K$-lien)
$(\bar G,\kappa)$ defined by $X$.\\
{\bf Keywords :} homogeneous space, weak approximation, Hasse principle, Galois cohomology.\\
{\bf MSC classes (2010):} 11E72, 14M17, 14G05.
\end{abstract}

\selectlanguage{french}
\begin{abstract}
Soit $K$ un corps global et $G$ un $K$-groupe fini r\'esoluble. Sous certaines hypoth\`eses sur une extension d\'eployant $G$, on d\'emontre que
l'espace homog\`ene $V:=G\backslash G'$ avec $G'$ un $K$-groupe semi-simple simplement connexe v\'erifie l'approximation faible. On utilise une version plus
pr\'ecise de ce r\'esultat pour d\'emontrer le principe de Hasse pour des espaces homog\`enes $X$ sous un $K$-groupe $G'$ semi-simple
simplement connexe \`a stabilisateur g\'eom\'etrique $\bar G$ fini et r\'esoluble, sous certaines hypoth\`eses sur le $K$-lien $(\bar G,\kappa)$
d\'efini par $X$.\\
{\bf Mots cl\'es :} espace homog\`ene, approximation faible, principe de Hasse, cohomologie galoisienne.
\end{abstract}

\section{Introduction}
Soit $K$ un corps global, i.e. un corps de nombres ou le corps de fonctions d'une courbe $C$ sur un corps fini. Dans le cadre des espaces homog\`enes de
groupes lin\'eaires connexes, l'approximation faible et le principe de Hasse sont deux propri\'et\'es qui ont \'et\'e v\'erifi\'ees, par exemple, pour
les espaces principaux homog\`enes sous un groupe semi-simple et simplement connexe, et cela depuis d\'ej\`a presque un demi-si\`ecle. Pour les espaces
homog\`enes \`a stabilisateur non trivial, ou m\^eme pour les espaces principaux homog\`enes sous les tores, on conna\^\i t cependant des exemples ne
v\'erifiant pas le principe de Hasse, ainsi que des exemples ayant des points rationnels mais ne v\'erifiant pas l'approximation faible.

Or, depuis la mise en \'evidence de l'obstruction de Brauer-Manin pour le principe de Hasse et pour l'approximation faible, introduites respectivement par Manin
et par Colliot-Th\'el\`ene et Sansuc, on ne conna\^\i t pas d'espace homog\`ene dont le d\'efaut de ces propri\'et\'es ne soit pas
expliqu\'e par ces obstructions. La question
naturelle \`a se poser alors est de savoir si cette obstruction est la seule dans les deux cas. Dans \cite{Borovoi96}, Borovoi a d\'emontr\'e que
l'obstruction de Brauer-Manin au principe de Hasse et \`a l'approximation faible est la seule pour les espaces homog\`enes sous un groupe lin\'eaire
connexe \`a stabilisateur connexe, ou encore \`a stabilisateur ab\'elien en supposant que la partie semi-simple du groupe ambiant est simplement connexe.
La question g\'en\'erale reste pourtant totalement ouverte.

Si l'on consid\`ere le cas ``oppos\'e'' de celui des stabilisateurs connexes, i.e. celui des
stabilisateurs finis, on ne conna\^\i t que quelques cas particuliers, comme le cas ab\'elien chez Borovoi d\'ej\`a mentionn\'e, ou les travaux de Harari
et Demarche sur l'approximation faible pour des quotients de la forme $G\backslash\sln$ pour certains $G$ finis constants et non ab\'eliens (c.f.
\cite{HarariBulletinSMF} et \cite{Demarche}). Le r\'esultat principal dans cette direction est cependant celui de Neukirch dans \cite{NeukirchSolvable},
lequel a inspir\'e la r\'edaction de ce texte.\\

Lorsqu'on veut consid\'erer des stabilisateurs finis, on peut le faire sous diff\'erents groupes ambiants. Or, du moins pour l'approximation faible,
la question est triviale lorsqu'on consid\`ere un groupe ambiant unipotent et la r\'eponse est d\'ej\`a connue pour un groupe ambiant torique (car la
question se ram\`ene trivialement aux espaces principaux homog\`enes), le cas int\'eressant \'etant alors celui des groupes semi-simples.
De plus, si l'on consid\`ere un $K$-groupe fini $G$ plong\'e dans un groupe $G'$ semi-simple et si l'on note $\rho:\tilde G'\to G'$ le rev\^etement
universel de $G'$, il est ais\'e de voir que l'on a un isomorphisme canonique entre les vari\'et\'es $\tilde V:=\rho^{-1} (G)\backslash\tilde G'$ et
$V:=G\backslash G'$ induit par $\rho$. Le noyau de $\rho$ \'etant fini et central, le groupe $\rho^{-1}(G)$ est alors un groupe fini et une extension
centrale de $G$. Ainsi, on voit que l'on peut se restreindre \`a \'etudier le cas des groupes ambiants semi-simples simplement connexes comme il a
d\'ej\`a \'et\'e fait par Borovoi, Demarche et Harari dans leurs travaux.

Le but de ce texte est donc de d\'emontrer un r\'esultat d'approximation faible pour des espaces homog\`enes $V:=G\backslash G'$ avec $G'$ semi-simple
simplement connexe et $G$ fini et r\'esoluble (i.e. $G(\bar K)$ \'etant un groupe r\'esoluble). On d\'emontre notamment le r\'esultat suivant :

\begin{thm}[Cons\'equence du th\'eor\`eme \ref{theoreme AF pour les groupes resolubles}]\label{theoreme intro}
Soient $K$ un corps global et $G$ un $K$-groupe fini r\'esoluble d'exposant $n$ premier \`a la caract\'eristique de $K$. On suppose qu'il existe une
extension finie galoisienne $K'/K$ d\'eployant $G$ et telle que, si l'on note $m(K')$ le cardinal de l'ensemble des racines de l'unit\'e contenues dans $K'$, on a
$(m(K'),n)=1$. Soit $V=G\backslash G'$ pour un $K$-plongement $G\to G'$ avec $G'$ un $K$-groupe semi-simple simplement connexe. Alors $V$ v\'erifie
l'approximation faible.
\end{thm}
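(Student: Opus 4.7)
Le résultat étant énoncé comme conséquence du théorème \ref{theoreme AF pour les groupes resolubles}, la stratégie est de vérifier que l'hypothèse numérique $(m(K'), n) = 1$ satisfait bien les conditions techniques du théorème principal, lesquelles portent vraisemblablement sur l'action de $\gal(K'/K)$ sur $G(\bar K)$ et sur sa compatibilité avec l'action cyclotomique. Cette vérification devrait être relativement directe, la condition $(m(K'), n) = 1$ garantissant que les caractères cyclotomiques intervenant sont en un sens ``orthogonaux'' aux caractères galoisiens portés par $G$, ce qui est précisément la situation où les problèmes de plongement à noyau résoluble admettent des solutions globales à la Neukirch.

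Pour la preuve du théorème principal lui-même, j'envisagerais une récurrence sur la longueur de la suite dérivée de $G$. Posant $G^{(1)} = [G,G]$, la suite exacte $1 \to G^{(1)} \to G \to G^{\ab} \to 1$ induit, via les plongements $G^{(1)} \subset G \subset G'$, une factorisation $G^{(1)}\backslash G' \to G\backslash G' = V$ qui est un torseur sous le groupe abélien fini $G^{\ab}$. Les exposants de $G^{(1)}$ et de $G^{\ab}$ divisent $n$, et le corps $K'$ déploie encore ces sous-quotients, de sorte que l'hypothèse $(m(K'), n) = 1$ est stable par passage aux sous-quotients et l'hypothèse de récurrence s'applique à l'étage intermédiaire. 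Le cas de base est celui où $G$ est abélien : on se ramène alors, via l'approximation forte pour $G'$ semi-simple simplement connexe, à un problème de relèvement global de cocycles dans $H^1(K, G)$ à partir de données locales $(\xi_v)_{v \in S}$. L'hypothèse $(m(K'), n) = 1$ permet, par un argument à la Neukirch \cite{NeukirchSolvable} combinant théorie de Kummer dans une extension cyclotomique auxiliaire et théorème de densité de Chebotarev, de résoudre ce relèvement et donc de construire le point global approximant les points locaux donnés.

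L'obstacle principal sera le recollement fin des étapes de la récurrence : en redescendant du torseur intermédiaire $G^{(1)}\backslash G'$ vers $V$, il faut contrôler simultanément les approximations locales et les conditions de déploiement sur une extension qui reste compatible avec l'hypothèse $(m(K'), n) = 1$ ; en particulier, il faut éviter que l'extension déployant l'un des sous-quotients n'introduise de nouvelles racines de l'unité incompatibles avec l'exposant. La robustesse du théorème principal invoqué doit résider dans une formulation de l'hypothèse qui se propage correctement le long du dévissage, ce dont la condition étudiée ici n'est qu'un cas particulier maniable, et c'est la formulation de cette hypothèse générale qui constitue à mon sens la véritable difficulté conceptuelle de ce travail.
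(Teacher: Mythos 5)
Your first paragraph is correct and is essentially what the paper does: Theorem \ref{theoreme intro} carries \emph{verbatim} the hypothesis $(m(K'),n)=1$ of Theorem \ref{theoreme AF pour les groupes resolubles} (one only has to choose an auxiliary cyclotomic extension $L/K'$ containing $\zeta_n$ and let $S$ run over all finite sets of places), and the translation of weak approximation for $V=G\backslash G'$ into surjectivity of $H^1(K,G)\to\prod_{v\in S}H^1(K_v,G)$ is the standard argument via the exact sequence of the torsor $G'\to V$, the vanishing of $H^1(K_v,G')$ at the finite places, the Hasse principle for $H^1(K,G')$ and weak approximation for the variety $G'$. So the deduction itself is unproblematic.

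The genuine gap is in your sketch of the main theorem, and it is structural. You dévisse along $1\to G^{(1)}\to G\to G^{\ab}\to 1$, i.e.\ you put the \emph{non-abelian} group in the kernel. Lifting a class from $H^1(K,G^{\ab})$ back to $H^1(K,G)$ (equivalently, descending an adelic point of $V$ along the $G^{\ab}$-torsor $G^{(1)}\backslash G'\to V$) is then obstructed by a class in the \emph{non-abelian} $H^2$ of a lien on the twisted $G^{(1)}$; deciding neutrality of such classes is precisely the hard problem (it is the subject of Section \ref{section PH}) and cannot be settled by a Tate--Shafarevich vanishing statement. The paper runs the induction the other way: it peels off an \emph{abelian characteristic subgroup} $A$ (the $\ell$-torsion of the last nonzero derived subgroup $D^j(G)$), so that the quotient $H=G/A$ is the smaller solvable group and the obstruction to lifting $\gamma\in H^1(K,H)$ is an honest abelian class $\delta(\gamma)\in H^2(K,{}_cA)$. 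Moreover, plain weak approximation is not an inductive invariant: what propagates is the refined statement that the global class can be chosen cyclic outside $S$ and ramified only at places of $P$ (totally split in a cyclotomic extension containing $\zeta_n$), together with the control of Lemma \ref{lemme controle des cocycles qui relevent} ensuring that the lifted cocycle is split by a field $K''$ still avoiding the relevant roots of unity. These two ingredients are exactly what kill $\delta(\gamma)_v$ locally (Lemma \ref{lemme de relevement local}, which needs $\zeta_n\in K_v$ and total ramification) and $\delta(\gamma)$ globally ($\Sha^2(K,{}_cA)=0$, Corollary \ref{corollaire nullite du sha2 pour les groupes abeliens}), after which a twisting step corrects the global lift at the places of $S$. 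You correctly sense that the formulation of the inductive hypothesis is ``la véritable difficulté'', but the mechanism you would need is not supplied, and the decomposition you chose makes the abelian-obstruction machinery unavailable.
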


Ce th\'eor\`eme am\'eliore le r\'esultat de Neukirch dans \cite{NeukirchSolvable}, corollaire de ses travaux sur le probl\`eme du plongement,
qui montre le cas particulier du th\'eor\`eme ci-dessus pour les groupes constants. Il simplifie aussi la d\'emonstration de ce cas particulier car, le
probl\`eme du plongement \'etant d'une nature diff\'erente de celle de l'approximation faible, la preuve de Neukirch devient par cons\'equent assez
complexe.
Il est aussi int\'eressant de remarquer que ce r\'esultat entra\^\i ne la trivialit\'e de l'obstruction de Brauer-Manin \`a l'approximation faible pour
ces espaces. En fait, on peut m\^eme d\'emontrer, en utilisant les formules dans \cite{GLABrnral}, que ces espaces
homog\`enes ont un groupe de Brauer non ramifi\'e alg\'ebrique $\brnral V$ trivial (cf. aussi \cite[Corollaire 5.7]{ColliotBrnr} pour le cas particulier
o\`u le stabilisateur $G$ est constant).

On voit aussi que les hypoth\`eses faites sur les racines de l'unit\'e contenues dans le corps $K'$ imposent que $G$ soit toujours un groupe d'ordre
impair, car tout corps global de caract\'eristique diff\'erente de $2$ contient $\mu_2=\{1,-1\}$, et en caract\'eristique $2$ on \'evite de prendre des
groupes d'ordre pair. On en d\'eduit que le fait de demander que $G$ soit r\'esoluble est en un certain sens superflu,
vu que l'on sait d'apr\`es le th\'eor\`eme de Feit et Thompson que tout groupe fini d'ordre impair est r\'esoluble. Ainsi, le r\'esultat de Neukirch
entra\^\i nait d\'ej\`a l'approximation faible pour tout $\q$-groupe fini $G$ constant d'ordre impair, et la g\'en\'eralisation que l'on pr\'esente
ici entra\^\i ne l'approximation faible, par exemple, d\`es que $G$ est d'ordre impair et d\'eploy\'e par une extension $K'/K$ ayant une place r\'eelle,
car dans ce cas on a bien $\mu(K')=\mu_2$.\\

On rappelle que pour toute $K$-vari\'et\'e $V$ on a un plongement diagonal
\[V(K)\hookrightarrow \prod_{v\in\Omega_K}V(K_v),\]
de ses $K$-points dans le produit sur $\Omega_K$ de ses $K_v$-points, o\`u $\Omega_K$ est l'ensemble des places de $K$ et $K_v$ est le
compl\'et\'e de $K$ par rapport \`a la place $v$. L'ensemble $\prod_{v\in\Omega_K}V(K_v)$ peut \^etre muni de la topologie produit, o\`u chacun des $V(K_v)$
est muni de la topologie $v$-adique. On dit alors que $V$ v\'erifie l'approximation faible si l'image de l'ensemble $V(K)$ par ce plongement est dense dans l'ensemble
$\prod_{v\in\Omega_K}V(K_v)$. Dans le cas o\`u $V(K)\neq\emptyset$, cela revient \`a demander que, pour tout ensemble fini $S$ de places de $K$, l'ensemble
$V(K)$ soit dense dans le produit $\prod_{v\in S}V(K_v)$.
Or, dans le cadre des espaces homog\`enes sous un groupe $G'$ semi-simple simplement connexe cette propri\'et\'e admet une traduction cohomologique :
l'approximation faible pour la vari\'et\'e $V=G\backslash G'$ consid\'er\'ee dans le th\'eor\`eme \ref{theoreme intro} est en fait \'equivalente \`a la
surjectivit\'e de l'application
\begin{equation}\label{equation rappel approximation faible}
H^1(K,G)\to \prod_{v\in S} H^1(K_v,G),
\end{equation}
pour tout ensemble fini $S$ de places de $K$. En effet, il suffit de consid\'erer
le diagramme commutatif \`a lignes exactes suivant.
\[\xymatrix{
G'(K) \ar[r] \ar[d] & V (K) \ar[r]^{\delta} \ar[d] & H^1(K,G) \ar[r] \ar[d] & H^1(K,G') \ar[d] \\
\displaystyle{\prod_{v\in S}}G'(K_v) \ar[r] & \displaystyle{\prod_{v\in S}}V(K_v) \ar[r]^{\delta} & \displaystyle{\prod_{v\in S}}H^1(K_v,G) \ar[r] &
\displaystyle{\prod_{v\in S}}H^1(K_v,G')
}\]
et de rappeler que pour toute place non archim\'edienne on a $H^1(K_v,G')=1$ (cf. \cite{KneserH1LocalI}, \cite{KneserH1LocalII}, \cite{BruhatTits})
et que l'on a une bijection
\[H^1(K,G')\xrightarrow{\sim} \prod_{v\in\Omega_K}H^1(K_v,G'),\]
ce qui donne la nullit\'e de $H^1(K,G')$ dans le cas de caract\'eristique positive (cf.
\cite{HarderHasseCarp}), ainsi qu'une bijection
\begin{equation}\label{equation PH SS SC}
H^1(K,G')\xrightarrow{\sim} \prod_{v\in\Omega_\infty}H^1(K_v,G'),
\end{equation}
dans le cas d'un corps de nombres, o\`u $\Omega_\infty$ repr\'esente l'ensemble fini des places archim\'ediennes (cf. \cite{HarderHasseCar0I}, \cite{HarderHasseCar0II},
\cite{KneserHasseCar0}, \cite{ChernousovE8}). Prenant en compte que la vari\'et\'e $G'$ v\'erifie l'approximation faible (cf. \cite{KneserAF}) et que
les application $\delta$ sont continues pour les topologies correspondantes (discr\`ete pour $V(K)$ et pour les $H^1$, $v$-adique pour $V(K_v)$),
il est facile d'en d\'eduire l'\'equivalence entre l'approximation aux places $v\in S$ pour $V$ et la surjectivit\'e de
\eqref{equation rappel approximation faible}.\\

On voit alors qu'il s'agit ici de regarder la cohomologie galoisienne non ab\'elienne des groupes finis r\'esolubles. Les r\'esultats que l'on montrera
par la suite se pr\^etent donc \`a des d\'emonstrations par d\'evissage, i.e. il s'agira de casser le groupe $G$ en des morceaux plus simples.
Si l'on note alors $\Gamma_K=\gal(\bar K/K)$ pour une cl\^oture s\'eparable $\bar K$ de $K$, l'argument de d\'evissage nous m\^ene \`a l'\'etude
des $\Gamma_K$-modules simples dont l'ordre divise le cardinal du groupe. Il faudra pourtant demander un peu plus
que l'approximation faible \`a ces groupes pour pouvoir recoller les morceaux, l'id\'ee \`a la base \'etant de ``cacher la ramification dans de
bonnes places''. Dans la section \ref{section AF modules simples} on rappelle ce qui est connu dans cette direction, et qui est essentiellement
d\^u \`a Neukirch (cf. \cite{NeukirchSolvable} et \cite[Chapter IX]{NSW}), et on montre quelques r\'esultats suppl\'ementaires qui nous seront utiles
pour d\'emontrer le th\'eor\`eme principal. La section \ref{section AF} est consacr\'ee
\`a l'\'enonc\'e et la d\'emonstration du th\'eor\`eme principal. Ce r\'esultat nous permet par la suite de d\'emontrer un r\'esultat autour du principe
de Hasse pour des espaces homog\`enes \`a stabilisateur fini r\'esoluble dans la section \ref{section PH}.

\paragraph*{Notations}
Dans toute la suite de ce texte, on notera $\Gamma_{L/K}:=\gal(L/K)$ pour toute extension galoisienne $L/K$ et on notera $\Gamma_K$ le groupe de
Galois absolu de tout corps $K$, i.e. le groupe $\gal(\bar K/K)$ avec $\bar K$ une cl\^oture s\'eparable de $K$. Les groupes (ou ensembles) de cohomologie
galoisienne associ\'es \`a ces groupes de Galois seront not\'es respectivement $H^i(L/K,\cdot)$ et $H^i(K,\cdot)$. Le corps $K$ est toujours un corps
global, i.e. un corps de nombres ou le corps de fonction d'une courbe sur un corps fini, dont l'ensemble des places est not\'e $\Omega_K$ et dont
les diff\'erents compl\'et\'es sont not\'es $K_v$ pour $v\in\Omega_K$. On \'ecrit $\mu(K)$ pour d\'esigner l'ensemble des racines de l'unit\'e
contenues dans le corps $K$ et $m(K)$ pour d\'esigner son cardinal. Enfin, pour tout entier $n\geq 2$ premier \`a la caract\'eristique de $K$,
on note $\mu_n$ le sous-groupe de $\bar K^*$ des racines $n$-i\`emes de l'unit\'e et $\zeta_n\in\mu_n$ un g\'en\'erateur de ce groupe.

On fera aussi un abus de notation en identifiant tout $K$-groupe $G$ fini d'ordre premier \`a la caract\'eristique de $K$ avec le $\Gamma_K$-groupe (ou module)
$G(\bar K)$ correspondant. Ainsi, on se permettra de noter $b\in G$ pour tout point g\'eom\'etrique $b\in G(\bar K)$ et aussi, pour tout sous-quotient
$\Delta$ de $\Gamma_K$ dont l'action sur $G(\bar K)$ est bien d\'efinie, on notera
\[H^1(\Delta,G):=H^1(\Delta,G(\bar K))\quad\text{et}\quad\hom(\Delta,G):=\hom_{\text{cont}}(\Delta,G(\bar K)),\]
Ceci s'appliquera notamment, dans le cas o\`u $K$ est un corps global, pour $\Delta=D_v$, o\`u $D_v$ correspond au quotient mod\'er\'ement
ramifi\'e du groupe de d\'ecomposition d'une place $v$ de $K$ et $G$ est un groupe d\'eploy\'e par l'extension maximale mod\'er\'ement ramifi\'ee.
On remarque aussi que, puisque $G$ est fini, on a $G(\bar K)=G(\bar L)$ pour tout corps $L$ contenant $K$, ce qui permet par exemple l'identification de
$H^1(\Gamma_{K_v},G(\bar K))$ avec $H^1(K_v,G)$. Ces identifications seront toujours sous-entendues dans la suite.

Pour tout $\Gamma_K$-module fini $A$ d'exposant $n$, on notera $A^*$ le $\Gamma_K$-module $\hom(A,\mu_n)$. Pour un tel module,
les groupes de Tate-Shafarevich $\Sha^i(K,A)$ sont d\'efinis, pour $i=1,2$ en ce qui nous concerne, comme
\[\Sha^i(K,A):=\ker\left[H^i(K,A)\to\prod_{v\in\Omega_K}H^i(K_v,A)\right].\]
On rappelle aussi que la dualit\'e de Poitou-Tate nous donne une dualit\'e parfaite entre $\Sha^2(K,A)$ et $\Sha^1(K,A^*)$.

Enfin, puisque dans la suite on aura toujours recours \`a diff\'erents ensembles infinis de places de $K$ qui \'evitent d'autres ensembles finis de places,
on utilisera l'expression ``presque toute place {\tt xxxxx}'' pour signifier ``toute place {\tt xxxxx} \`a l'exception d'une quantit\'e finie''.

\section{Pr\'eliminaires}\label{section AF modules simples}
Soit $A$ un $\Gamma_K$-module fini d'ordre premier \`a la caract\'eristique de $K$ (lequel correspond aux points g\'eom\'etriques d'un $K$-groupe
alg\'ebrique fini et ab\'elien). Comme il a \'et\'e \'evoqu\'e dans l'introduction, on entend par approximation faible pour $A$ la surjectivit\'e
de l'application
\[H^1(K,A)\to \prod_{v\in S} H^1(K_v,A),\]
pour tout ensemble fini $S$ de places de $K$. Neukirch a montr\'e que, sous certaines hypoth\`eses, on peut non seulement demander l'existence d'une
classe globale $\alpha\in H^1(K,A)$ relevant une famille locale donn\'ee $(\beta_v)_{v\in S}\in\prod_SH^1(K_v,A)$, mais aussi demander qu'une telle
classe $\alpha$ ait des bonnes propri\'et\'es de ramification, dans un sens qui sera explicit\'e par la suite. Ce r\'esultat d'approximation
``mieux que faible'' a \'et\'e d\'emontr\'e par Neukirch dans \cite[Theorem 1]{NeukirchSolvable} pour les $\Gamma_K$-modules simples sur un corps
des nombres, et une version pour des corps globaux quelconques peut \^etre trouv\'ee dans \cite[Theorem 9.3.3]{NSW}. On \'enonce dans cette section
un corollaire de la version g\'en\'erale que l'on utilisera pour montrer le th\'eor\`eme principal. Pour ce faire, on rappelle quelques d\'efinitions.

\begin{defi}
Soit $K_v$ le compl\'et\'e d'un corps global en une place finie et soit $G$ un ${K_v}$-groupe (resp. un $\Gamma_{K_v}$-module). Une classe
$\alpha\in H^1(K_v,G)$ est dite :
\begin{itemize}
\item \emph{cyclique} si elle est d\'eploy\'ee par une extension cyclique $L_v/K_v$, i.e. si sa
restriction \`a $H^1(L_v,G)$ est triviale ou, de fa\c con \'equivalente, si elle provient de l'ensemble (resp. groupe) $H^1(L_v/K_v,G^{L_v})$
par inflation ;
\item \emph{non ramifi\'ee} si elle est d\'eploy\'ee par l'extension non ramifi\'ee maximale $K^\nr_v$ de $K_v$, i.e. si elle provient de l'ensemble
(resp. groupe) $H^1(K_v^\nr/K_v,G^{\Gamma_{K_v^{\nr}}})$ par inflation ;
\item \emph{ramifi\'ee} si elle n'est pas non ramifi\'ee ;
\item \emph{totalement ramifi\'ee} si elle est d\'eploy\'ee par une extension totalement ramifi\'ee $L_v/K_v$.
\end{itemize}
\end{defi}

\begin{rem}\label{remarque nr et tr}
Une classe non ramifi\'ee est toujours cyclique. En effet, toute classe $\alpha\in H^1(K_v,G)$ est d\'eploy\'ee par une extension
finie, d'o\`u l'on sait qu'une classe $\alpha$ provenant de $H^1(K_v^\nr/K_v,G)$ provient en fait de $H^1(L_v/K_v,G)$, o\`u $\Gamma_{L_v/K_v}$ correspond
\`a un quotient fini de $\Gamma_{K^\nr_v/K_v}\cong\hat\z$ et il est donc un groupe cyclique.

De plus, lorsque $G$ est un groupe constant, il est facile de voir qu'une classe non triviale et totalement ramifi\'ee est forc\'ement ramifi\'ee.
\end{rem}

Avec ces d\'efinitions, on peut \'enoncer l'analogue du r\'esultat de Neukirch. Dans toute la suite du texte, $\ell$ d\'esigne un nombre premier diff\'erent de $p$.

\begin{thm}\label{theoreme 1 de Neukirch}
Soit $K$ un corps global de caract\'eristique $p\geq 0$ et soit $A$ un $\Gamma_K$-module fini simple de $\ell$-torsion pour $\ell\neq p$. On suppose qu'il existe une
extension finie galoisienne $K'/K$ d\'eployant $A$ telle que $\zeta_\ell\not\in K'$. Soit $L/K'$ une extension cyclotomique (i.e. engendr\'ee par des racines de
l'unit\'e) contenant $\zeta_\ell$. Soit $S$ un ensemble fini de places de $K$ contenant les places archim\'ediennes et les places divisant $\ell$ et
soit enfin $P$ l'ensemble des places de $K$ totalement d\'ecompos\'ees pour l'extension $L/K$ n'appartenant pas \`a $S$.

Alors pour toute famille $(\beta_v)_{v\in S}\in\prod_SH^1(K_v,A)$, il existe un \'el\'ement $\alpha\in H^1(K,A)$ tel que, si l'on note $\alpha_v$
l'image de $\alpha$ dans $H^1(K_v,A)$, on a
\begin{itemize}
\item pour toute place $v\in S$, $\alpha_v=\beta_v$ ;
\item pour toute place $v\not\in S$, $\alpha_v$ est cyclique et, si $\alpha_v$ est ramifi\'e, alors $v\in P$.
\end{itemize}
\end{thm}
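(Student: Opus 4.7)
The plan is to derive this statement from the general version of Neukirch's theorem (NSW, Theorem 9.3.3), exploiting the freedom in that construction to force the ramified components to lie in $P$. I would first apply NSW 9.3.3 to produce a class $\alpha\in H^1(K,A)$ with $\alpha_v=\beta_v$ for $v\in S$ and unramified outside $S$ except possibly at a controlled finite auxiliary set $T\subset\Omega_K\setminus S$. For each unramified place $v\notin S\cup T$, the local class $\alpha_v$ is automatically cyclic by Remark \ref{remarque nr et tr}, so the remainder of the proof only has to deal with the places in $T$.

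The main step is then to arrange $T\subset P$ by means of a Chebotarev density argument. In the NSW construction, the places of $T$ are precisely those whose Frobenius lies in a specified conjugacy class of $\gal(M/K)$ for a finite Galois extension $M/K$ built from $K'$, $\mu_\ell$ and the splitting field of $A^*$; on the other hand, $v\in P$ means that Frobenius at $v$ restricts trivially to $\gal(L/K)$. I would apply Chebotarev to the compositum $ML/K$ to show that the two Frobenius constraints can be imposed simultaneously on a positive-density set of places, so one can indeed choose $T$ inside $P$. The cyclicity of $\alpha_v$ at $v\in T$ then follows from the construction itself, since the ramification there is produced by inflating a character of a cyclic (in fact cyclotomic) quotient of the local Galois group.

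The main obstacle I expect is the compatibility check in this Chebotarev step: one must verify, using the hypothesis $\zeta_\ell\notin K'$, that the Frobenius condition prescribed by NSW in $\gal(M/K)$ is compatible with triviality in $\gal(L\cap M/K)$. This amounts to a precise comparison between the cyclotomic subextension appearing inside $M$ (which is controlled by $A^*$ and hence by $\mu_\ell$) and the cyclotomic extension $L$; it is exactly here that the assumption $\mu_\ell\not\subset K'$ is used essentially, to guarantee that the two conditions do not collapse into an empty intersection. Once this compatibility is in place, the full statement follows by combining the NSW class $\alpha$ with this choice of $T$.
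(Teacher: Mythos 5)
Your overall idea --- reduce to the general form of Neukirch's approximation theorem, NSW Theorem 9.3.3 --- is the right one, and it is what the paper does. But the way you propose to use that theorem contains a genuine gap. You read NSW 9.3.3 as producing a class $\alpha$ that is unramified outside $S$ except at some uncontrolled finite auxiliary set $T$, and you then plan to force $T\subset P$ by an \emph{external} Chebotarev argument. That cannot work at the level of the statement: once a class $\alpha$ has been produced by the cited theorem, its ramified places are fixed, and no subsequent density argument can relocate them. To carry out your plan you would have to reopen the proof of NSW 9.3.3 and insert the extra Frobenius condition into its internal Chebotarev step; you do not do this, and you explicitly leave the key ``compatibility check'' as an obstacle you \emph{expect} to be able to overcome rather than one you resolve. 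So the central point of the theorem --- that the ramification can be confined to places totally split in $L/K$, with cyclic local classes --- is not actually established by your argument. In fact this control is already part of the \emph{conclusion} of NSW 9.3.3: there the set of places where ramification is permitted is precisely the set of places of the base field that split completely in the auxiliary cyclotomic extension (the paper's $L$, NSW's $\Omega$), so no additional Chebotarev step is needed at all.

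What does require verification --- and what your proposal omits entirely --- is that the data at hand satisfy the \emph{hypotheses} of NSW 9.3.3. That theorem requires the splitting field (NSW's $K$) not to contain $\zeta_\ell$ and the auxiliary extension $\Omega$ to be of degree prime to $\ell$ over it. Here $L/K'$ is an arbitrary cyclotomic extension containing $\zeta_\ell$, so $[L:K']$ may well be divisible by $\ell$ (for instance if $L$ contains $\zeta_{\ell^2}$), and one cannot apply the theorem with the pair $(K',L)$ directly. The paper's proof consists exactly in fixing this: one replaces $K'$ by $K'_\ell:=L^{\Delta}$, where $\Delta$ is the prime-to-$\ell$ torsion subgroup of the abelian group $\gal(L/K')$, so that $[L:K'_\ell]$ is prime to $\ell$, and one checks that $\zeta_\ell\notin K'_\ell$ because $[K'_\ell:K']$ is a power of $\ell$ while $[K'(\zeta_\ell):K']$ divides $\ell-1$. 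With this adjustment the cited theorem applies verbatim and yields the statement. Your proposal, by contrast, both skips this necessary hypothesis check and adds an unnecessary --- and, as written, unproved --- modification of the construction.
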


\begin{rem}
Dans l'article \cite{NeukirchSolvable}, le r\'esultat de Neukirch n'est valable qu'en caract\'eristique $0$ et pour $L=K'(\zeta_\ell)$.
\end{rem}

\begin{proof}
Le r\'esultat d\'ecoule facilement de \cite[Theorem 9.3.3]{NSW} de la fa\c con suivante. Soit $\Delta$ le sous-groupe de torsion premi\`ere \`a $\ell$ du groupe
ab\'elien $\Gamma_{L/K'}$ et soit $K'_\ell:=L^\Delta\subset L$. On voit alors que $[L:K'_\ell]$ est premier \`a $\ell$ et que $\zeta_\ell\not\in K'_\ell$ car
$[K'_\ell:K']$ est une puissance de $\ell$ et $\zeta_\ell\not\in K'$ (et l'extension $K'(\zeta_\ell)/K'$ est de degr\'e premier \`a $\ell$). Les donn\'ees
$(\ell,K,K'_\ell,L,\Omega_K,S)$ ci-dessus v\'erifient alors clairement les hypoth\`eses demand\'ees par \cite[Theorem 9.3.3]{NSW} pour, dans les notations
de \cite{NSW}, $(p,k,K,\Omega,S,T)$.
\end{proof}

La m\^eme d\'emonstration, avec les m\^emes notations mais en mettant \cite[Theorem 9.1.15(iii)]{NSW} \`a la place de \cite[Theorem 9.3.3]{NSW} et $P$ \`a la
place de $T$, s'applique par ailleurs au lemme ci-dessous, dont l'analogue chez Neukirch (cf. \cite[Lemma 2]{NeukirchSolvable}) est cl\'e dans la d\'emonstration
de son th\'eor\`eme :

\begin{lem}\label{lemme 2 de Neukirch}
Soit $K$ un corps global de caract\'eristique $p\geq 0$ et soit $A$ un $\Gamma_K$-module fini simple de $\ell$-torsion pour $\ell\neq p$. On suppose qu'il existe une
extension finie galoisienne $K'/K$ d\'eployant $A$ telle que $\zeta_\ell\not\in K'$. Soit $L/K'$  une extension cyclotomique contenant $\zeta_\ell$. Soit enfin $P$ un
ensemble de places de $K$ contenant presque toutes les places totalement d\'ecompos\'ees pour l'extension $L/K$. Alors le morphisme
\[H^1(K,A^*)\to\prod_{v\in P}H^1(K_v,A^*),\]
est injectif, o\`u $A^*:=\hom(A,\mu_{\ell})$.
\end{lem}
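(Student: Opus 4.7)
\emph{Esquisse de preuve.} Comme l'auteur le sugg\`ere juste avant l'\'enonc\'e, la strat\'egie est parall\`ele \`a celle du th\'eor\`eme \ref{theoreme 1 de Neukirch} : il s'agit de se ramener, via une construction interm\'ediaire du corps $K'_\ell$, aux hypoth\`eses d'un r\'esultat g\'en\'eral de \cite{NSW}. Puisque l'\'enonc\'e cherch\'e est cette fois une injectivit\'e (et non une surjectivit\'e), on remplace l'appel \`a \cite[Theorem 9.3.3]{NSW} par son analogue dual \cite[Theorem 9.1.15(iii)]{NSW}, et le r\^ole du param\`etre $T$ sera jou\'e par notre ensemble $P$.

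Plus pr\'ecis\'ement, je reprendrais exactement la construction de la preuve pr\'ec\'edente : on pose $K'_\ell:=L^\Delta\subset L$, o\`u $\Delta$ est le sous-groupe de torsion premi\`ere \`a $\ell$ du groupe ab\'elien $\Gamma_{L/K'}$. La m\^eme v\'erification qu'auparavant assure alors que $[L:K'_\ell]$ est premier \`a $\ell$, que $[K'_\ell:K']$ est une puissance de $\ell$ et que $\zeta_\ell\not\in K'_\ell$ (car $\zeta_\ell\not\in K'$ et $[K'(\zeta_\ell):K']$ est premier \`a $\ell$). Avec le m\^eme dictionnaire $(\ell,K,K'_\ell,L,\Omega_K,P)$ $\leftrightarrow$ $(p,k,K,\Omega,S,T)$ que dans la preuve du th\'eor\`eme \ref{theoreme 1 de Neukirch}, les hypoth\`eses de \cite[Theorem 9.1.15(iii)]{NSW} sont v\'erifi\'ees et on obtient directement l'injectivit\'e annonc\'ee.

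Le seul point qui me para\^\i t demander quelques mots de justification est le passage de ``\emph{toutes} les places totalement d\'ecompos\'ees'' (pr\'esum\'ement attendu par la condition sur $T$ dans \cite{NSW}) \`a l'hypoth\`ese plus flexible ``$P$ contient \emph{presque toutes} les places totalement d\'ecompos\'ees''. Mais ce point est sans cons\'equence : d'une part, ajouter des places suppl\'ementaires \`a $P$ ne fait que renforcer l'injectivit\'e ; d'autre part, l'omission d'un nombre fini de places ne peut affecter cette injectivit\'e que dans la mesure o\`u une classe globale $\alpha\in H^1(K,A^*)$ deviendrait triviale en dehors d'un ensemble cofini de places totalement d\'ecompos\'ees, ce qui est d\'ej\`a exclu par le th\'eor\`eme de NSW appliqu\'e \`a l'ensemble entier. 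Ainsi, au-del\`a de ce petit ajustement, l'argument est enti\`erement bureaucratique et la preuve ne pr\'esente pas de difficult\'e nouvelle.
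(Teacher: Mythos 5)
Votre preuve est correcte et suit exactement la d\'emarche de l'article : on reprend mot pour mot la construction de $K'_\ell$ de la d\'emonstration du th\'eor\`eme \ref{theoreme 1 de Neukirch} et l'on remplace \cite[Theorem 9.3.3]{NSW} par \cite[Theorem 9.1.15(iii)]{NSW}, avec $P$ dans le r\^ole de $T$. Votre dernier paragraphe est superflu --- le th\'eor\`eme de \cite{NSW}, tel qu'il est invoqu\'e ici, est d\'ej\`a \'enonc\'e pour un ensemble $T$ ne contenant que presque toutes les places totalement d\'ecompos\'ees --- et son argument, pris isol\'ement, serait d'ailleurs insuffisant (l'injectivit\'e sur un produit index\'e par un ensemble n'entra\^\i ne pas celle sur un produit index\'e par un sous-ensemble), mais cela n'affecte pas la validit\'e de l'ensemble de la preuve.
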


\begin{rem}
On notera en passant que la simplicit\'e de $A$ en tant que $\Gamma_K$-module est \'equivalente \`a celle de $A^*$.
\end{rem}

Ce lemme admet la g\'en\'eralisation suivante aux $\Gamma_K$-modules finis pas n\'ecessairement simples dont la preuve se ram\`ene par un
argument de d\'evissage au cas ci-dessus.

\begin{pro}\label{proposition injectivite en P pour les groupes abeliens}
Soit $K$ un corps global de caract\'eristique $p\geq 0$ et soit $A$ un $\Gamma_K$-module fini d'exposant $n$ premier \`a $p$. On suppose qu'il existe une
extension finie galoisienne $K'/K$ d\'eployant $A$ telle que, si l'on note $\mu(K')$ l'ensemble des racines de l'unit\'e contenues dans $K'$ et $m(K')$ son cardinal,
alors $(m(K'),n)=1$. Soit $L/K'$ une extension cyclotomique contenant $\zeta_n$. Soit enfin $P$ un ensemble de places de $K$ contenant presque toutes
les places totalement d\'ecompos\'ees pour l'extension $L/K$. Alors le morphisme
\[H^1(K,A^*)\to\prod_{v\in P}H^1(K_v,A^*),\]
est injectif, o\`u $A^*=\hom(A,\mu_n)$.
\end{pro}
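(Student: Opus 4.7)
La d\'emonstration proc\`ede par d\'evissage le long d'une suite de composition de $A$ comme $\Gamma_K$-module, se ramenant au cas simple trait\'e par le Lemme \ref{lemme 2 de Neukirch}. On commence par remplacer $P$ par son intersection avec l'ensemble des places totalement d\'ecompos\'ees dans $L/K$, qui contient encore presque toutes ces places et pour lequel l'injectivit\'e entra\^\i nerait celle voulue. La cons\'equence cruciale est que pour toute place $v\in P$, le groupe $\Gamma_{K_v}$ agit trivialement sur $A^*=\hom(A,\mu_n)$, puisque $L$ d\'eploie $A^*$ (car $L\supseteq K'$ d\'eploie $A$ et $L$ contient $\zeta_n$).

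On proc\`ede ensuite par r\'ecurrence sur la longueur d'une suite de composition de $A$. Si $A$ est $\Gamma_K$-simple, il est annul\'e par un nombre premier $\ell$ divisant $n$ ; l'hypoth\`ese $(m(K'),n)=1$ entra\^\i ne $\zeta_\ell\notin K'$, et comme $L/K'$ est cyclotomique contenant $\zeta_\ell$ et que $\hom(A,\mu_n)=\hom(A,\mu_\ell)$, le Lemme \ref{lemme 2 de Neukirch} s'applique directement. Pour le pas inductif, on prend un sous-$\Gamma_K$-module simple $A'\subset A$ et l'on pose $A''=A/A'$ ; la dualisation de $0\to A'\to A\to A''\to 0$ (exacte puisque tous les termes sont de $n$-torsion) donne la suite exacte $0\to (A'')^*\to A^*\to (A')^*\to 0$, et les hypoth\`eses de la proposition se transmettent verbatim \`a $A''$ (m\^eme $K'$, m\^eme $L$, exposant divisant $n$).

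Soit alors $x\in H^1(K,A^*)$ s'annulant en toute place $v\in P$. Son image dans $H^1(K,(A')^*)$ s'annule aussi sur $P$, donc est nulle par le cas simple appliqu\'e \`a $A'$ ; ainsi $x=\phi(y)$ pour un certain $y\in H^1(K,(A'')^*)$, o\`u $\phi$ est la fl\`eche naturelle issue de la suite exacte. Le point d\'ecisif est que pour $v\in P$, l'action triviale de $\Gamma_{K_v}$ sur les trois termes annule le cobord local $\delta_v\colon H^0(K_v,(A')^*)\to H^1(K_v,(A'')^*)$, le cocycle $\sigma\mapsto\sigma\tilde a-\tilde a$ le repr\'esentant \'etant identiquement nul sous une action triviale. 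Par cons\'equent $\phi_v$ est injective en toute place $v\in P$, et $\phi_v(y_v)=x_v=0$ impose $y_v=0$ ; donc $y$ appartient au noyau de la restriction correspondante pour $(A'')^*$, et l'hypoth\`ese de r\'ecurrence force $y=0$, d'o\`u $x=0$. L'obstacle principal est pr\'ecis\'ement cette chasse au diagramme : sans l'annulation de $\delta_v$, on saurait seulement que $y_v$ appartient \`a l'image du cobord local, sans moyen \'evident de modifier $y$ globalement pour le rendre trivial sur $P$ ; c'est exactement la propri\'et\'e d\'efinissant $P$ (d\'ecomposition totale dans $L$, qui d\'eploie $A^*$) qui fait fonctionner le d\'evissage.
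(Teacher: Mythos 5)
Votre preuve est correcte et suit essentiellement la même démarche que celle de l'article : dévissage par une suite exacte courte de sous-$\Gamma_K$-modules, réduction au lemme \ref{lemme 2 de Neukirch} pour le cas simple, puis injectivité locale de $H^1(K_v,(A'')^*)\to H^1(K_v,A^*)$ pour $v\in P$ obtenue par la trivialité de l'action de $\Gamma_{K_v}$ (qui annule le cobord local). La seule variation, sans conséquence, est que vous choisissez un sous-module simple $A'$ (récurrence sur la longueur d'une suite de composition) là où l'article prend un sous-module non trivial quelconque et applique l'hypothèse de récurrence aux deux morceaux.
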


\begin{proof}
Il est \'evident d'abord que l'on peut se restreindre au cas o\`u $P$ n'est compos\'e que de places totalement d\'ecompos\'ees pour l'extension $L/K$.
La preuve se fait alors par r\'ecurrence sur le cardinal de $A$, l'injectivit\'e \'etant v\'erifi\'ee dans le cas o\`u $A$ est simple d'apr\`es le lemme
\ref{lemme 2 de Neukirch}. Si $A$ n'est pas simple, on sait qu'il existe un sous-module $B$ non trivial induisant un quotient $C=A/B$ et l'on peut
supposer que $B$ et $C$ v\'erifient l'\'enonc\'e. On consid\`ere alors le diagramme commutatif \`a lignes exactes suivant :
\[\xymatrix{
H^1(K,C^*) \ar[d] \ar[r] & H^1(K,A^*) \ar[d] \ar[r] & H^1(K,B^*) \ar[d] \\
\displaystyle{\prod_{v\in P} H^1(K_v,C^*)} \ar[r] & \displaystyle{\prod_{v\in P} H^1(K_v,A^*)} \ar[r] & \displaystyle{\prod_{v\in P} H^1(K_v,B^*)} 
}\]
Soit alors $\alpha\in H^1(K,A^*)$ d'image nulle dans $\prod_P H^1(K_v,A^*)$. On voit alors que son image dans $\prod_P H^1(K_v,B^*)$ est nulle
et par cons\'equent son image dans $H^1(K,B^*)$ est nulle aussi d'apr\`es l'injectivit\'e pour $B^*$. L'\'el\'ement $\alpha$ provient donc de $H^1(K,C^*)$.
Or, puisque pour toute place $v\in P$ on a que $\zeta_n\in K_v$, l'action de $\Gamma_{K_v}$ sur les modules $A$, $B$, $C$ et $\mu_n$ est triviale, ce qui
nous dit que les $\Gamma_{K_v}$-modules $A^*$, $B^*$ et $C^*$ sont d\'eploy\'es, d'o\`u l'on voit qu'on a la suite exacte
\[A^*\to B^* \to H^1(K_v,C^*)\to H^1(K_v,A^*),\]
\`a partir de laquelle on d\'eduit que l'application $H^1(K_v,C^*)\to H^1(K_v,A^*)$ est injective pour toute place $v\in P$. On voit alors, en regardant
une pr\'eimage $\gamma$ de $\alpha$ dans $H^1(K,C^*)$, que ses localis\'es $\gamma_v\in H^1(K_v,C^*)$ sont triviaux pour toute place $v\in P$ car leur image
$\alpha_v\in H^1(K_v,A^*)$ l'est aussi, d'o\`u l'on voit que $\gamma=\alpha=0$ d'apr\`es l'injectivit\'e pour $C^*$, ce qui conclut.
\end{proof}

La proposition \ref{proposition injectivite en P pour les groupes abeliens} a une cons\'equence int\'eressante, laquelle avait
d\'ej\`a \'et\'e remarqu\'ee par Neukirch pour le cas des $\Gamma_K$-modules simples (cf. \cite[Theorem 2]{NeukirchSolvable}), et qui nous sera utile
par la suite. 

\begin{cor}\label{corollaire nullite du sha2 pour les groupes abeliens}
Soient $K$ un corps global et $A$ un $K$-groupe fini ab\'elien d'exposant $n$ premier \`a la caract\'eristique de $K$. On suppose qu'il existe une
extension finie galoisienne $K'/K$ d\'eployant $A$ telle que $(m(K'),n)=1$. Alors on a $\Sha^2(K,A)=0$.
\end{cor}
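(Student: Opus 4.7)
The plan is to reduce to the injectivity statement already established in Proposition \ref{proposition injectivite en P pour les groupes abeliens}, via Poitou-Tate duality. Since the paper explicitly recalls that Poitou-Tate gives a perfect pairing between $\Sha^2(K,A)$ and $\Sha^1(K,A^*)$ (with $A^* = \hom(A,\mu_n)$), it suffices to show $\Sha^1(K,A^*) = 0$.

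To apply the proposition, I first choose an auxiliary extension: take $L := K'(\zeta_n)$, which is a cyclotomic extension of $K'$ (adjoining roots of unity) and by construction contains $\zeta_n$. Let $P$ be the set of all (or almost all) finite places of $K$ that are totally split in $L/K$; by Chebotarev, $P$ is infinite. The hypotheses of Proposition \ref{proposition injectivite en P pour les groupes abeliens} — namely that $A$ is a finite $\Gamma_K$-module of exponent $n$ coprime to $p$, split by $K'$ with $(m(K'),n)=1$, and that $L/K'$ is cyclotomic containing $\zeta_n$ — are then all satisfied, so the localization map
\[
H^1(K,A^*) \longrightarrow \prod_{v\in P} H^1(K_v,A^*)
\]
is injective.

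Now let $\alpha \in \Sha^1(K,A^*)$. By definition the image of $\alpha$ in $H^1(K_v,A^*)$ vanishes for every place $v \in \Omega_K$, and in particular for every $v \in P$. The injectivity above forces $\alpha = 0$, whence $\Sha^1(K,A^*) = 0$ and therefore $\Sha^2(K,A) = 0$ by Poitou-Tate. There is no real obstacle here: the corollary is essentially a formal consequence of the proposition together with the duality, and all the work has already been done in establishing Proposition \ref{proposition injectivite en P pour les groupes abeliens} and, through it, Lemme \ref{lemme 2 de Neukirch}.
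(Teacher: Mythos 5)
Your proof is correct and follows exactly the same route as the paper: apply Proposition \ref{proposition injectivite en P pour les groupes abeliens} (with $L=K'(\zeta_n)$ and $P$ the totally split places) to get $\Sha^1(K,A^*)=0$, then conclude by Poitou--Tate duality. The only difference is that you spell out the choice of $L$ and $P$, which the paper leaves implicit.
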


\begin{proof}
D'apr\`es la proposition \ref{proposition injectivite en P pour les groupes abeliens}, l'application
\[H^1(K,A^*)\to\prod_{v\in P}H^1(K_v,A^*),\]
est injective, d'o\`u $\Sha^1(K,A^*)=0$. On voit aussit\^ot par la dualit\'e de Poitou-Tate que $\Sha^2(K,A)=0$.
\end{proof}

Pour terminer cette section, on donne un lemme qui justifiera par la suite le fait de forcer la ramification \`a tomber dans les places
$v\in\Omega_K$ totalement d\'ecompos\'ees pour une extension $L/K$ comme celle de l'\'enonc\'e du th\'eor\`eme \ref{theoreme 1 de Neukirch}.

\begin{lem}\label{lemme de relevement local}
Soit $K_v$ un corps local de caract\'eristique r\'esiduelle $p>0$, soit $G$ un $K_v$-groupe constant d'exposant $n$ et soit $H$ un $K_v$-groupe
quotient de $G$ (qui est par cons\'equent constant). On suppose que $n$ est premier \`a $p$ et que $\zeta_n\in K_v$. Soit $\alpha\in H^1(K_v,H)$
une classe totalement ramifi\'ee et cyclique. Alors il existe une classe $\beta\in H^1(K_v,G)$ totalement ramifi\'ee et cyclique relevant $\alpha$,
i.e. $\alpha$ est l'image de $\beta$ par l'application $H^1(K_v,G)\to H^1(K_v,H)$.
\end{lem}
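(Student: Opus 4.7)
Puisque $G$ et $H$ sont constants, les ensembles $H^1(K_v,G)$ et $H^1(K_v,H)$ s'identifient aux classes de conjugaison d'homomorphismes continus $\Gamma_{K_v}\to G$ (resp. $\to H$). La classe $\alpha$ est repr\'esent\'ee par un homomorphisme $\phi$ d'image cyclique, et le corps fixe $L_v$ de $\ker\phi$ est une extension cyclique totalement ramifi\'ee de $K_v$ (elle est contenue dans toute extension d\'eployant $\alpha$, et toute sous-extension d'une extension totalement ramifi\'ee est totalement ramifi\'ee); notons $d=[L_v:K_v]$, qui divise $n$ puisque $n$ est l'exposant de $H$. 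On cherche donc un homomorphisme $\psi:\Gamma_{K_v}\to G$ se factorisant par $\Gamma_{M_v/K_v}$ pour une extension cyclique totalement ramifi\'ee $M_v\supseteq L_v$, et relevant $\phi$ modulo $G\to H$.

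Comme $n$ est premier \`a $p$ et $\zeta_n\in K_v$, la th\'eorie de Kummer appliqu\'ee au degr\'e $d$ donne $L_v=K_v(\sqrt[d]{\pi'})$ pour une uniformisante $\pi'$ de $K_v$ : en effet toute extension cyclique totalement ramifi\'ee de degr\'e divisant $n$ est engendr\'ee par le radical d'un \'el\'ement de valuation premi\`ere \`a $d$, que l'on ram\`ene \`a une uniformisante quitte \`a multiplier par une puissance $d$-i\`eme. On fixe le g\'en\'erateur $\sigma\in\Gamma_{L_v/K_v}$ d\'efini par $\sqrt[d]{\pi'}\mapsto\zeta_d\sqrt[d]{\pi'}$ et l'on pose $h:=\phi(\sigma)\in H$, \'el\'ement d'ordre $d$. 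On rel\`eve $h$ en $g\in G$ via $G\twoheadrightarrow H$ et l'on note $de$ son ordre, qui divise $n$ et est multiple de $d$.

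On pose alors $M_v:=K_v(\sqrt[de]{\pi'})$ : par Kummer ($\zeta_{de}\in K_v$ puisque $de\mid n$), l'extension $M_v/K_v$ est cyclique, totalement ramifi\'ee de degr\'e $de$ (car $v(\pi')=1$ est premier \`a $de$), et contient clairement $L_v$. En choisissant $\zeta_{de}$ tel que $\zeta_{de}^e=\zeta_d$ et en d\'esignant par $\tau$ le g\'en\'erateur de $\Gamma_{M_v/K_v}$ envoyant $\sqrt[de]{\pi'}$ sur $\zeta_{de}\sqrt[de]{\pi'}$, un calcul direct fournit $\tau|_{L_v}=\sigma$. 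L'application $\psi:\Gamma_{M_v/K_v}\to G$ envoyant $\tau$ sur $g$ est alors bien d\'efinie ($\tau$ et $g$ \'etant tous deux d'ordre $de$), et la classe $\beta:=[\psi]\in H^1(K_v,G)$ est cyclique et totalement ramifi\'ee par construction. Enfin, puisque $g^d\in\ker(G\to H)$, la compos\'ee de $\psi$ avec $G\to H$ tue $\tau^d$ et se factorise donc par $\Gamma_{L_v/K_v}$, o\`u elle co\"\i ncide avec $\phi$ ; ainsi $\beta$ rel\`eve bien $\alpha$.

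L'unique point d\'elicat est l'appariement des g\'en\'erateurs des deux groupes de Galois cycliques $\Gamma_{M_v/K_v}$ et $\Gamma_{L_v/K_v}$ ; il est r\'egl\'e par les choix coh\'erents $\zeta_d=\zeta_{de}^e$ et $(\sqrt[de]{\pi'})^e=\sqrt[d]{\pi'}$. Tout le reste de la v\'erification est imm\'ediat.
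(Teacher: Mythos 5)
Your proof is correct, and it takes a genuinely different route from the paper's. The paper works with the presentation of the tame quotient $D_v=\Gamma_{K_v}/S_v$ by two generators $\sigma_v,\tau_v$ subject to $\sigma_v\tau_v\sigma_v^{-1}=\tau_v^{q_v}$: after disposing of wild ramification via $H^1(S_v,G)=1$, it characterizes total ramification of a class by the condition that the image of a representing morphism $a$ be generated by $a(\tau_v)$, lifts $a(\tau_v)$ to some $g\in G$, sets $b(\tau_v)=g$ and $b(\sigma_v)=g^m$, and checks the defining relation using $n\mid q_v-1$ (which is where $\zeta_n\in K_v$ enters). You instead work field-theoretically: you identify the minimal splitting field $L_v$ of $\alpha$ (correctly observing that it is both cyclic and totally ramified because it is contained in every splitting field), write it by Kummer theory as $K_v(\sqrt[d]{\pi'})$ for a uniformizer $\pi'$, and enlarge it to $M_v=K_v(\sqrt[de]{\pi'})$ to accommodate the order $de$ of the chosen lift $g$ of $h=\phi(\sigma)$; here $\zeta_n\in K_v$ enters through Kummer theory rather than through the congruence $q_v\equiv 1\pmod n$ --- two equivalent faces of the same tameness phenomenon. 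The two constructions produce essentially the same class $\beta$ (its image is $\langle g\rangle$ in both cases), but yours is more explicit about the splitting field and avoids discussing $S_v$ altogether, at the price of invoking the standard normal form of totally tamely ramified extensions as radical extensions of a uniformizer (your one-line justification of that normal form is the only terse point; it is a classical fact, e.g.\ Serre, \emph{Corps locaux}, ch.~I). One microscopic slip: $n$ is the exponent of $G$, not of $H$; the exponent of $H$ divides $n$, which is all you need for $d\mid n$.
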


\begin{proof}
Soit $S_v$ le sous-groupe de $\Gamma_{K_v}$ correspondant \`a la ramification sauvage. On a la suite exacte de restriction-inflation
\[0\to H^1(D_v,G)\to H^1(K_v,G)\to H^1(S_v,G),\]
o\`u l'on rappelle que le groupe $D_v=\Gamma_{K_v}/S_v$ est le groupe profini engendr\'e par deux g\'en\'erateurs $\sigma_v$ et $\tau_v$ soumis \`a
la relation $\sigma_v\tau_v\sigma_v^{-1}=\tau_v^{q_v}$, o\`u $q_v$ est le cardinal du corps r\'esiduel (cf. \cite[7.5.3]{NSW}). Le groupe
$S_v$ \'etant un pro-$p$-groupe et le $K_v$-groupe $G$ \'etant constant d'exposant $n$ premier \`a $p$, on a que
$H^1(S_v,G)=\hom(S_v,G)/\mathrm{conj.}=1$. Il en va de m\^eme pour $H$, donc il nous suffit d'\'etudier l'application $H^1(D_v,G)\to H^1(D_v,H)$.

La classe $\alpha\in H^1(K_v,H)$ est alors repr\'esent\'ee par un morphisme $a:D_v\to H$ et son noyau d\'efinit une extension totalement ramifi\'ee
et cyclique de $K_v$, ce qui nous dit que l'image de $a$ est un sous-groupe cyclique de $H$.  Or, on sait que $\tau_v$ engendre le sous-groupe
$I_v$ de $D_v$ correspondant \`a $\Gamma_{K^{\mathrm{mr}}/K^\nr}$, o\`u $K^\mathrm{mr}/K$ est l'extension mod\'er\'ement ramifi\'ee maximale de $K$.
Cela nous dit qu'il y a \'equivalence entre affirmer que $\alpha$ est totalement ramifi\'ee et qu'elle est repr\'esent\'ee par un morphisme $a$ dont
l'image est engendr\'ee par $a(\tau_v)$. En effet, l'image d'un tel morphisme est isomorphe au groupe cyclique $\Gamma_{L_v/K_v}$ correspondant \`a
l'extension $L_v/K_v$ induite par le noyau de $a$. La classe $\alpha$ est alors totalement ramifi\'ee si et seulement si cette extension l'est,
ce qui est le cas si et seulement si cette extension n'admet pas de sous-extension non ramifi\'ee, ce qui est vrai pour sa part si et seulement si
l'image $\bar\tau_v$ de $\tau_v$ dans $\Gamma_{L_v/K_v}$ engendre tout le groupe car la sous-extension non ramifi\'ee maximale de $L_v/K_v$ est
celle invariante par $\bar\tau_v$. Ainsi, on d\'eduit que $a(\tau_v)$ engendre l'image de $a$ et l'on voit par la suite que $a(\sigma_v)=a(\tau_v)^m$
pour un certain $m\in\n$.

Soient maintenant $g\in G$ une pr\'eimage de $a(\tau_v)$ dans $G$.
On d\'efinit le morphisme $b:D_v\to G$ en posant $b(\tau_v)=g$ et $b(\sigma_v)=g^m$. C'est un morphisme bien d\'efini car on a
\[b(\sigma_v\tau_v\sigma_v^{-1})=b(\sigma_v)b(\tau_v)b(\sigma_v)^{-1}=b(\tau_v)^mb(\tau_v)b(\tau_v)^{-m}=b(\tau_v)\]
et
\[b(\tau_v^{q_v})=b(\tau_v)^{q_v}=b(\tau_v),\]
car, l'exposant de $G$ \'etant $n$, on sait que $b(\tau_v)^n=1$ et, puisque $K_v$ contient $\zeta_n$, on a que $n$ divise $q_v-1$. On voit alors
imm\'ediatement que la classe $\beta$ de $b$ dans $H^1(K_v,G)$ rel\`eve $\alpha$. Il est aussi \'evident que $\beta$ est totalement ramifi\'ee et
cyclique car l'image de $b$ est \'egale au sous-groupe de $G$ engendr\'e par $g$ et est donc engendr\'ee par $b(\tau_v)$, d'o\`u le noyau de
$b$ d\'efinit une extension cyclique de $K_v$ qui est totalement ramifi\'ee d'apr\`es l'argument donn\'e dans le dernier paragraphe.
\end{proof}

\section{Approximation faible pour des stabilisateurs r\'esolubles}\label{section AF}
D\'esormais on notera $\mu(K)$ le groupe des racines de l'unit\'e contenues dans le corps $K$. On notera aussi $m(K)$ le cardinal de $\mu(K)$.
On rappelle qu'une extension cyclotomique $L/K$ est une extension engendr\'ee par des racines de l'unit\'e.

Dans cette section, on montre le r\'esultat principal de ce texte, qui est le suivant :

\begin{thm}\label{theoreme AF pour les groupes resolubles}
Soient $K$ un corps global et $G$ un $K$-groupe fini r\'esoluble d'exposant $n$ premier \`a la caract\'eristique de $K$. On suppose qu'il existe une
extension finie galoisienne $K'/K$ d\'eployant $G$ telle que $(m(K'),n)=1$. Soit $L/K'$ une extension cyclotomique contenant $\zeta_n$. Soit $S$ un ensemble
fini de places de $K$ contenant les places archim\'ediennes et les places $v$ ramifi\'ees pour $K'/K$. Soit enfin $P$ l'ensemble des places de $K$
totalement d\'ecompos\'ees pour l'extension $L/K$ ne divisant pas $n$ et n'appartenant pas \`a $S$.

Alors, pour tout \'el\'ement $(\beta_v)_{v\in S}$ de $\prod_S H^1(K_v,G)$, il existe une classe $\alpha\in H^1(K,G)$ telle que, si l'on note
$\alpha_v$ l'image de $\alpha$ dans $H^1(K_v,G)$, on a
\begin{itemize}
\item[(i)] pour toute place $v\in S$, $\alpha_v=\beta_v$ ;
\item[(ii)] pour toute place $v\not\in S$, $\alpha_v$ est cyclique et, si $\alpha_v$ est ramifi\'ee, alors elle est totalement ramifi\'ee et $v\in P$.
\end{itemize}
En particulier, si $V=G\backslash G'$ pour un $K$-plongement $G\to G'$ avec $G'$ un $K$-groupe semi-simple simplement connexe, alors $V$ v\'erifie
l'approximation faible.
\end{thm}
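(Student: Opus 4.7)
Le plan est de proc\'eder par r\'ecurrence sur $|G|$, en ramenant le cas r\'esoluble au cas ab\'elien par d\'evissage via la suite d\'eriv\'ee, puis au cas simple couvert par le th\'eor\`eme \ref{theoreme 1 de Neukirch}. L'assertion finale concernant l'approximation faible pour $V$ r\'esulte imm\'ediatement du point (i) via le diagramme cohomologique rappel\'e dans l'introduction ; je me concentre donc sur la construction de $\alpha$.

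Supposons $G$ non ab\'elien et soit $A\subset G$ le dernier terme non trivial de la suite d\'eriv\'ee de $G(\bar K)$ : c'est un sous-groupe caract\'eristique, donc $\Gamma_K$-stable, ab\'elien, et $H:=G/A$ est un $K$-groupe r\'esoluble v\'erifiant les m\^emes hypoth\`eses que $G$ et d'ordre strictement inf\'erieur. L'hypoth\`ese de r\'ecurrence, appliqu\'ee aux projections $(\gamma_v)_{v\in S}$ des $\beta_v$ dans $H^1(K_v,H)$, fournit un $\gamma\in H^1(K,H)$ v\'erifiant (i)-(ii) pour $H$. En tordant par $\gamma$, la fibre de $H^1(K,G)\to H^1(K,H)$ au-dessus de $\gamma$ est, lorsqu'elle est non vide, un torseur sous $H^1(K,{}_\gamma A)$, o\`u ${}_\gamma A$ est encore un $K$-groupe ab\'elien d'exposant divisant $n$. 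On montre que l'obstruction dans $H^2(K,{}_\gamma A)$ est localement triviale : nulle en $v\in S$ car $\beta_v$ rel\`eve $\gamma_v$, et aux places $v\in P$ en appliquant le lemme \ref{lemme de relevement local} (o\`u $\zeta_n\in K_v$ et $G$ est constant) pour relever les classes totalement ramifi\'ees cycliques ; sa trivialit\'e globale suit alors du corollaire \ref{corollaire nullite du sha2 pour les groupes abeliens}. On ajuste ensuite ce rel\`evement par une classe de $H^1(K,{}_\gamma A)$ fournie par le cas ab\'elien du th\'eor\`eme, appliqu\'e \`a ${}_\gamma A$, pour obtenir $\alpha$ v\'erifiant (i)-(ii) pour $G$.

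Le cas ab\'elien se traite par une r\'ecurrence analogue sur $|G|$ : pour $G$ simple comme $\Gamma_K$-module, le th\'eor\`eme \ref{theoreme 1 de Neukirch} donne (i) et la partie cyclique de (ii), et le lemme \ref{lemme de relevement local} permet de renforcer la ramification en \og totalement ramifi\'ee \fg{} aux places $v\in P$ ; le cas non simple se r\'eduit au pr\'ec\'edent via une suite exacte courte $0\to A_0\to G\to A_1\to 0$ de sous-$\Gamma_K$-modules, en utilisant la nullit\'e de $\Sha^2$. Le point le plus d\'elicat, et donc le principal obstacle, sera le contr\^ole de l'invariant $(m(K''),n)=1$ pour le corps $K''$ d\'eployant le module tordu ${}_\gamma A$ \`a chaque \'etape de la r\'ecurrence : c'est la restriction de la ramification de $\gamma$ aux places totalement d\'ecompos\'ees dans $L$ (o\`u $\zeta_n$ est d\'ej\`a pr\'esent) qui garantit cet invariant, permettant d'appliquer de fa\c{c}on it\'er\'ee le corollaire \ref{corollaire nullite du sha2 pour les groupes abeliens}.
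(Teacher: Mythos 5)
Votre plan (r\'ecurrence sur $|G|$, d\'evissage par le dernier terme non trivial de la suite d\'eriv\'ee, obstruction dans $H^2(K,{}_\gamma A)$ tu\'ee localement par le lemme \ref{lemme de relevement local} puis globalement par le corollaire \ref{corollaire nullite du sha2 pour les groupes abeliens}) est bien celui de la preuve du texte, mais le point que vous signalez vous-m\^eme comme le plus d\'elicat --- garantir que le module tordu ${}_\gamma A$ est d\'eploy\'e par une extension $K''/K$ v\'erifiant $(m(K''),n)=1$, sans quoi le corollaire \ref{corollaire nullite du sha2 pour les groupes abeliens} ne s'applique pas --- est trait\'e par un argument qui ne tient pas. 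Vous affirmez que c'est la localisation de la ramification de $\gamma$ dans $P$ qui assure cet invariant : c'est faux. Le corps $K''$ d\'eployant $\gamma$ est une extension de $K'$ dont le degr\'e n'est divisible que par des premiers divisant $n$, mais $[K'(\zeta_\ell):K']$ divise $\ell-1$, qui peut parfaitement \^etre divisible par un autre premier $\ell'\mid n$ ; rien n'interdit donc a priori $K'(\zeta_\ell)\subset K''$, et le fait que $K''/K$ ne ramifie qu'en $S\cup P$ n'y change rien (l'extension $K(\zeta_\ell)/K$ ne ramifie qu'au-dessus de $\ell$, donc dans $S$, et les places de $P$ sont pr\'ecis\'ement celles o\`u $\zeta_\ell$ est d\'ej\`a pr\'esent localement). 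C'est exactement pour cela que la preuve du texte passe par le lemme \ref{lemme controle des cocycles qui relevent}, que vous n'invoquez jamais : pour chaque $\ell_i\mid n$, on choisit par \v Cebotarev une place auxiliaire $v_i$ totalement d\'ecompos\'ee dans $K'$ et inerte dans $K'(\zeta_{\ell_i})/K'$, on l'ajoute \`a $S$ avec la condition locale $\beta_{v_i}=0$, et c'est l'annulation de $\gamma$ en ces places qui force $\zeta_{\ell_i}\not\in K''$. Sans cet ingr\'edient, la construction du rel\`evement global $\beta_0$ de $\gamma$ s'effondre.

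Deux autres points, li\'es entre eux. Vous prenez pour $A$ tout le dernier d\'eriv\'e $D^j(G)$ alors que le texte n'en prend que la partie de $\ell$-torsion pour un premier $\ell$ fix\'e ; ce choix n'est pas anodin, car dans l'\'etape de correction il faut appliquer l'hypoth\`ese de r\'ecurrence au module $A$ tordu par un cocycle $b_0$ relevant $\gamma$ dans $G$ (et non par $\gamma$ lui-m\^eme : la fibre au-dessus de $\gamma$ n'est d'ailleurs que l'image de $H^1(K,{}_{b_0}A)$, pas un torseur sous ce groupe), et le contr\^ole du corps $K'''$ d\'eployant $b_0$ repose sur le fait que $[K''':K'']$ est une puissance de $\ell$, donc premier \`a $[K''(\zeta_\ell):K'']\mid\ell-1$ ; avec un $A$ de torsion mixte, ce raisonnement dispara\^\i t. Enfin, il faudrait aussi traiter la trivialit\'e locale de l'obstruction aux places $v\not\in S\cup P$, qui vient de $H^2(K_v^{\nr}/K_v,{}_\gamma A)=0$, et v\'erifier qu'apr\`es d\'etorsion par $b_0$ les propri\'et\'es (i) et (ii) subsistent, ce qui utilise que $b_0$ est non ramifi\'e hors d'un ensemble fini convenable et trivial aux places totalement d\'ecompos\'ees dans $K'''$.
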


Avant de d\'emontrer le r\'esultat, on montre que, d\`es que l'approximation faible (ou encore ``mieux que faible'') est v\'erifi\'ee, on
a un certain contr\^ole sur les cocycles relevant les classes locales. Ce fait, exprim\'e dans le lemme suivant, est d'une importance capitale
dans la d\'emonstration.

\begin{lem}\label{lemme controle des cocycles qui relevent}
Soit $G$ un $K$-groupe fini d'exposant $n$ premier \`a la caract\'eristique de $K$ d\'eploy\'e par une extension galoisienne $K'/K$. Soient
$\ell_1,\ldots,\ell_r$ des nombres premiers diff\'erents de la caract\'eristique de $K$. On suppose que $\zeta_{\ell_i}\not\in K'$ pour tout
$1\leq i\leq r$. Supposons enfin que $G$ v\'erifie l'approximation ``mieux que faible'' par rapport \`a un ensemble $P\subset\Omega_K$, c'est-\`a-dire que,
pour toute donn\'ee $(S,(\beta_v)_{v\in S})$ comme dans le th\'eor\`eme \ref{theoreme AF pour les groupes resolubles}, on peut trouver $\alpha\in H^1(K_v,G)$
v\'erifiant les propri\'et\'es (i) et (ii).

Alors pour toute telle donn\'ee $(S,(\beta_v)_{v\in S})$, il existe une extension galoisienne $K''/K$ v\'erifiant
\begin{itemize}
\item[(*)] $K''$ contient $K'$ et ne contient pas $\zeta_{\ell_i}$ pour tout $1\leq i\leq r$ ;
\end{itemize}
et une classe $\alpha\in H^1(K''/K,G)$ v\'erifiant les propri\'et\'es (i) et (ii).
\end{lem}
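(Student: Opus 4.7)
L'idée sera de ``forcer'' le corps de d\'ecomposition de $\alpha$ \`a \'eviter les $\zeta_{\ell_i}$ en ajoutant des places judicieusement choisies \`a l'ensemble $S$ avant d'invoquer l'hypoth\`ese d'approximation. Plus pr\'ecis\'ement, pour chaque $1\leq i\leq r$, comme $\zeta_{\ell_i}\not\in K'$, le th\'eor\`eme de densit\'e de Chebotarev fournira une place $v_i$ de $K$ hors de $S$ qui est totalement d\'ecompos\'ee dans $K'/K$ mais pas dans $K(\zeta_{\ell_i})/K$ : en effet, il existe un \'el\'ement de $\Gamma_{K'(\zeta_{\ell_i})/K}$ trivial sur $\Gamma_{K'/K}$ et non trivial sur $\Gamma_{K(\zeta_{\ell_i})/K}$. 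Le plan est alors de poser $S':=S\cup\{v_1,\ldots,v_r\}$, de compl\'eter la famille locale en prescrivant $\beta_{v_i}:=1$, et d'appliquer l'hypoth\`ese d'approximation ``mieux que faible'' aux donn\'ees $(S',(\beta_v)_{v\in S'})$. On en tirera une classe $\alpha\in H^1(K,G)$ v\'erifiant (i) et (ii) relativement \`a $S'$, et a fortiori relativement \`a $S$ (les classes ajout\'ees aux places $v_i$ \'etant triviales, donc non ramifi\'ees et cycliques).

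Pour produire ensuite l'extension $K''$, on exploitera le fait que $G$ est d\'eploy\'e par $K'$. La restriction $\phi:=\beta|_{\Gamma_{K'}}$ d'un cocycle $\beta$ repr\'esentant $\alpha$ sera alors un v\'eritable morphisme de groupes $\Gamma_{K'}\to G(\bar K)$, et un calcul direct utilisant la condition de cocycle (combin\'e \`a la trivialit\'e de l'action de $\Gamma_{K'}$ sur $G(\bar K)$) montrera que $\ker\phi$ est normal dans $\Gamma_K$. On posera alors $K'':=\bar K^{\ker\phi}$ : ce sera une extension galoisienne de $K$ contenant $K'$, et $\alpha$ proviendra par inflation d'une classe de $H^1(K''/K,G)$ puisque $\beta$ s'annule sur $\Gamma_{K''}=\ker\phi$.

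Il restera \`a v\'erifier la condition (*), c'est-\`a-dire que $\zeta_{\ell_i}\not\in K''$ pour tout $i$. Le point crucial sera d'\'etablir qu'une place $v$ totalement d\'ecompos\'ee dans $K'/K$ et v\'erifiant $\alpha_v=1$ est en fait totalement d\'ecompos\'ee dans $K''/K$ : comme $G(\bar K)=G(K')$, tout cobord local trivialisant $\alpha_v$ se restreindra en le cocycle nul sur $\Gamma_{K_v}\subset\Gamma_{K'}$, d'o\`u $\Gamma_{K_v}\subset\ker\phi=\Gamma_{K''}$. Appliquant ceci \`a chacun des $v_i$, on obtiendra la contradiction voulue : si $\zeta_{\ell_i}\in K''$ pour un certain $i$, alors $K(\zeta_{\ell_i})\subset K''$, et $v_i$ serait totalement d\'ecompos\'ee dans $K(\zeta_{\ell_i})/K$, contredisant le choix de $v_i$. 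L'obstacle technique principal sera pr\'ecis\'ement cette derni\`ere v\'erification locale, qui fait le pont entre la condition cohomologique $\alpha_{v_i}=1$ et la condition galoisienne recherch\'ee sur $K''$.
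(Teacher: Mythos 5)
Your proposal is correct and follows essentially the same route as the paper: auxiliary places $v_i$ chosen by \v Cebotarev to be totally split in $K'/K$ but not in $K(\zeta_{\ell_i})/K$, the prescription $\beta_{v_i}=1$ on the enlarged set $S'$, and $K''$ cut out by the kernel of the homomorphism obtained by restricting a representing cocycle to $\Gamma_{K'}$. The one genuine divergence is in how the extension $K''/K$ is made Galois: you assert that $\ker\phi$ is already normal in $\Gamma_K$, whereas the paper states that it is normal in $\Gamma_{K'}$ ``mais pas forc\'ement dans $\Gamma_K$'' and therefore replaces it by the intersection of its $\Gamma_{K'/K}$-conjugates. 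Your claim is in fact valid: for $\tau\in\ker\phi$ and $\sigma\in\Gamma_K$ the cocycle relation gives
\[\beta_{\sigma\tau\sigma^{-1}}=\beta_\sigma\cdot{}^{\sigma}\beta_\tau\cdot{}^{\sigma\tau}\beta_{\sigma^{-1}}=\beta_\sigma\cdot{}^{\sigma}\beta_{\sigma^{-1}}=\beta_{\sigma\sigma^{-1}}=1,\]
using $\beta_\tau=1$ and the fact that $\tau\in\Gamma_{K'}$ acts trivially on $G(\bar K)$; so your (a priori smaller) $K''$ is Galois over $K$ and works just as well, and the rest goes through: $\alpha_{v_i}=1$ together with $\Gamma_{K_{v_i}}\subset\Gamma_{K'}$ forces $\beta|_{\Gamma_{K_{v_i}}}=1$, hence one (and by normality every) decomposition group of $v_i$ lies in $\Gamma_{K''}$, so $v_i$ splits totally in $K''/K$ and $\zeta_{\ell_i}\notin K''$. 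What your variant buys is a slightly cleaner construction (no need to pass to the intersection of conjugates); what the paper's buys is robustness, since it does not rely on the restricted morphism coming from a global cocycle.
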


\begin{proof}
Soit $v_i$ une place de $K$ n'appartenant pas \`a $S$ totalement d\'ecompos\'ee en $K'$, mais qui reste inerte pour l'extension
$K'(\zeta_{\ell_i})/K'$, extension qui est par ailleurs galoisienne sur $K$. En d'autres termes, le groupe $\Gamma_{K'(\zeta_{\ell_i})/K}$ est une extension
d'un groupe cyclique par le groupe $\Gamma_{K'/K}$ et on demande alors que le groupe de d\'ecomposition de $v_i$ corresponde \`a ce sous-groupe cyclique
distingu\'e de $\Gamma_{K'(\zeta_{\ell_i})/K}$. Une telle place existe en dehors de $S$ d'apr\`es le th\'eor\`eme de \v Cebotarev.

Il suffit alors de consid\'erer un \'el\'ement $\alpha\in H^1(K,G)$ v\'erifiant les propri\'et\'es (i) et (ii) et tel qu'en plus
pour ses localis\'es $\alpha_{v_i}\in H^1(K_{v_i},G)$ on ait $\alpha_{v_i}=0$ pour $1\leq i\leq r$, c'est-\`a-dire qu'on rajoute
les $v_i$ \`a l'ensemble fini $S$, on pose $\beta_{v_i}=0$ et on applique l'approximation ``mieux que faible'' pour l'ensemble $S'$ r\'eunion de $S$ et
des $v_i$. En effet, de cela on d\'eduit que, si l'on note $\alpha'$ la restriction de $\alpha$
\`a $H^1(K',G)$, alors elle est repr\'esent\'ee par un morphisme $a'$ (car $K'$ d\'eploie $G$) qui s'annule en le groupe de d\'ecomposition de $w_i$
pour toute place $w_i$ de $K'$ au-dessus de $v_i$, car $\alpha_{v_i}=0$ entra\^\i ne $\alpha'_{w_i}=0$. Or, puisque $v_i$
est totalement d\'ecompos\'ee en $K'/K$, on a $\Gamma_{K_{v_i}}\subset\Gamma_{K'}$ et $K_{v_i}\cong K'_{w_i}$ et les groupes de d\'ecomposition des
diff\'erents $w_i$ s'identifient aux sous-groupes $\sigma\Gamma_{K_{v_i}}\sigma^{-1}$ pour $\sigma\in\Gamma_{K'/K}$. Ainsi, on voit que $a'$
s'annule en $\sigma\Gamma_{K_{v_i}}\sigma^{-1}$ pour $\sigma\in\Gamma_{K'/K}$.

D'autre part, si l'on note $\Delta'$ le noyau de $a'$, on sait qu'il est distingu\'e dans $\Gamma_{K'}$, mais pas forc\'ement dans $\Gamma_K$. Soit alors
$\Delta''$ l'intersection des $\sigma\Delta'\sigma^{-1}$ pour $\sigma\in\Gamma_{K'/K}$. C'est un sous-groupe distingu\'e de $\Gamma_K$ et d\'efinit alors
une extension finie galoisienne $K''/K$. Or, puisque tous les conjugu\'es de $\Gamma_{K_{v_i}}\subset\Gamma_K$ sont dans $\Delta'$, on voit qu'ils sont
aussi contenus dans $\Delta''=\Gamma_{K''}$. Ainsi, on voit que $v_i$ est totalement d\'ecompos\'ee pour l'extension $K''/K$, d'o\`u l'on d\'eduit que
$\zeta_{\ell_i}\not\in K''$. De plus, il est \'evident que $\alpha$ provient de $H^1(K''/K,G)$, car si l'on choisit un cocycle $a:\Gamma_K\to G(\bar K)$
le repr\'esentant, on voit que $a'$ n'est rien d'autre que la restriction de $a$ \`a $\Gamma_{K'}\subset\Gamma_K$ \`a conjugaison pr\`es et il
s'annule alors en $\Gamma_{K''}\subset\Delta'$.

Enfin, il faut remarquer que, si la propri\'et\'e (ii) n'est v\'erifi\'ee a priori que pour les places $v\not\in S'$ et non pas
pour les places $v\not\in S$, il est clair que pour les places $v\in S'\smallsetminus S$ (i.e. pour les $v_i$) on a bien que $\alpha_v$ est non ramifi\'e
(et donc cyclique) car il est trivial par d\'efinition.
\end{proof}

\begin{rem}
Vue la d\'emonstration de ce lemme, on voit qu'on a un r\'esultat totalement analogue pour un groupe $G$ v\'erifiant l'approximation faible, ou
encore l'approximation tr\`es faible, i.e. l'approximation pour tout ensemble fini $S$ de places de $K$ en dehors d'un ensemble fini $S_0$ fix\'e
auparavant et d\'ependant de $G$. C'est-\`a-dire, par exemple, que si $G$ est un groupe fini d'exposant premier \`a la caract\'eristique de $K$
v\'erifiant l'approximation faible et d\'eploy\'e par une extension galoisienne $K'/K$ telle que $\zeta_{\ell_i}\not\in K'$ pour des nombres premiers
$\ell_1,\ldots,\ell_r$ donn\'es, alors pour tout ensemble fini $S$ de places de $K$ on peut relever une famille d'\'el\'ements $(\beta_v)_{v\in S}$
dans $\prod_S H^1(K_v,G)$ en un \'el\'ement $\alpha\in H^1(K''/K,G)$ avec $K''$ contenant $K'$ et ne contenant pas les $\zeta_{\ell_i}$ pour 
$1\leq i\leq r$.
\end{rem}

\begin{proof}[D\'emonstration du th\'eor\`eme \ref{theoreme AF pour les groupes resolubles}]
On d\'emontre l'\'enonc\'e par r\'ecurrence sur le cardinal de $G$, et on fait cela en trois \'etapes.

\paragraph*{\'Etape 1 : \'Etablissement de la r\'ecurrence} Pour $i\geq 0$, notons $D^{i+1}(G):=[D^{i}(G),D^{i}(G)]$ le $(i+1)$-\`eme d\'eriv\'e de
$D^0(G):=G$. Ce sont des sous-groupes caract\'eristiques de $G$ et, puisque $G$ est r\'esoluble, on sait qu'il existe $j\in\n$ tel que $D^j(G)\neq\{1\}$ et
$D^{j+1}(G)=\{1\}$, d'o\`u l'on d\'eduit que $D^j(G)$ est ab\'elien et caract\'eristique dans $G$. Soit alors $A$ la partie de $\ell$-torsion de $D^j(G)$
pour un $\ell$ donn\'e divisant le cardinal de $D^j(G)$ (et donc de $G$). C'est aussi un sous-groupe caract\'eristique de $G$, car il est caract\'eristique
dans $D^j(G)$. Le sous-groupe $A$ est alors un $\Gamma_K$-module de $\ell$-torsion non trivial, ce qui nous permet de regarder $H=G/A$ comme un
$\Gamma_K$-groupe de cardinal strictement plus petit que celui de $G$.

Pour ces donn\'ees, on a le diagramme commutatif \`a lignes exactes suivant.
\begin{equation}\label{equation diagramme approximation faible}
\xymatrix{
H^1(K,A) \ar[d] \ar[r] & H^1(K,G) \ar[d] \ar[r] & H^1(K,H) \ar[d] \\
\displaystyle{\prod_{v\in \Omega_K} H^1(K_v,A)} \ar[r] & \displaystyle{\prod_{v\in \Omega_K} H^1(K_v,G)} \ar[r] & \displaystyle{\prod_{v\in \Omega_K} H^1(K_v,H)}.
}
\end{equation}
Dans le cas o\`u $G$ serait lui-m\^eme un $\Gamma_K$-module de $\ell$-torsion (cas o\`u l'on aurait par cons\'equent $A=G$), il se peut que $G$ soit
simple ou qu'il ne le soit pas. Dans le premier cas, le r\'esultat se ram\`ene au th\'eor\`eme \ref{theoreme 1 de Neukirch}.
En effet, il suffit de remarquer que, pour $v\in P$ et $G$ de $\ell$-torsion, une classe $\alpha_v\in H^1(K_v,G)$ cyclique et ramifi\'ee est forc\'ement
d\'eploy\'ee par une extension totalement ramifi\'ee, car elle correspond \`a un morphisme (on rappelle que pour $v\in P$ le groupe $G$
est constant en tant que $\Gamma_{K_v}$-groupe) dont l'image est isomorphe \`a $\z/\ell\z$ et elle est alors
d\'eploy\'ee par une extension de degr\'e $\ell$, laquelle est totalement ramifi\'ee d\`es qu'elle est ramifi\'ee.

Dans le deuxi\`eme cas, on (re)d\'efinit $A$ comme n'importe quel sous-module propre non trivial de $G$, ce qui assure que $A$ sera toujours un
sous-$\Gamma_K$-module de cardinal strictement plus petit que celui de $G$ et on peut alors supposer que l'on a le diagramme
\eqref{equation diagramme approximation faible} et que tant $A$ que $H$ v\'erifient l'\'enonc\'e pour la m\^eme extension $L/K$, ainsi que pour
toute autre extension v\'erifiant les m\^emes hypoth\`eses.

\paragraph*{\'Etape 2 : Construction d'un \'el\'ement global dans $H^1(K,G)$} Consid\'erons alors $(\beta_v)_{v\in S}\in\prod_{S}H^1(K_v,G)$. Soit
$(\gamma_v)_{v\in S}$ l'image de cet \'el\'ement dans $\prod_S H^1(K_v,H)$. On peut le relever en un \'el\'ement $\gamma\in H^1(K,H)$ v\'erifiant la
deuxi\`eme propri\'et\'e de l'\'enonc\'e. De plus, le lemme \ref{lemme controle des cocycles qui relevent} nous dit que l'on peut supposer que $\gamma$
provient de $H^1(K''/K,H)$ pour une certaine extension galoisienne $K''/K$ contenant $K'$ et ne contenant pas $\zeta_\ell$ pour tout $\ell$ divisant
$n$. On obtient alors des localisations $\gamma_v\in H^1(K_v,H)$ pour toutes les places $v\not\in S$.

Suivant Serre (cf. \cite[I.5.6]{SerreCohGal}), pour un cocycle $c$ repr\'esentant $\gamma$, on peut associer \`a $\gamma$ une classe
$\delta(\gamma)\in H^2(K,\asd{}{c}{}{}{A})$, o\`u $\asd{}{c}{}{}{A}$ est le tordu de $A$ par le cocycle $c$, i.e. un module \'egal \`a $A$ en tant que
groupe mais avec l'action tordue
\[\asd{\sigma*^c}{}{}{}{a}=c_\sigma\cdot\asd{\sigma}{}{}{}{a},\]
o\`u $c_\sigma\in H$ agit sur $A$ par conjugaison. De plus, toujours d'apr\`es \cite[I.5.6]{SerreCohGal}, on sait que $\gamma$ se rel\`eve en un
\'el\'ement de $H^1(K,G)$ si et seulement si $\delta(\gamma)=0$. Or, cette construction \'etant compatible avec la localisation, on a des classes
$\delta(\gamma_v)\in H^2(K_v,\asd{}{c}{}{}{A})$ qui correspondent \`a $\delta(\gamma)_v$ pour toute place $v$ de $K$. Ces classes se trouvent
\^etre triviales. En effet :

Pour $v\in S$, $\gamma_v$ provient de $\beta_v\in H^1(K_v,G)$, d'o\`u $\delta(\gamma_v)=0$  d'apr\`es ce que l'on vient de dire.

Pour $v\not\in S$, si $\gamma_v\in H^1(K_v^\nr/K_v,H)$, alors son image dans $H^2(K_v,\asd{}{c}{}{}{A})$ est en fait dans
$H^2(K_v^\nr/K_v,\asd{}{c}{}{}{A})$ et ce groupe est trivial car la dimension cohomologique de $\gal(K_v^\nr/K_v)\cong\hat\z$ est $1$. On voit alors
par ailleurs que l'on peut relever $\gamma_v$ en un \'el\'ement $\beta_v\in H^1(K_v^\nr/K_v,G)$.

Sinon, on sait que $\alpha_v$ est totalement ramifi\'ee et cyclique et que $v\in P$, donc $K_v$ d\'eploie $G$ et contient $\zeta_n$
(car $\Gamma_{K_v}\subset\Gamma_{L}$). Le lemme \ref{lemme de relevement local} nous dit alors qu'il existe une classe $\beta_v\in H^1(K_v,G)$
totalement ramifi\'ee et cyclique relevant $\gamma_v$ et donc en particulier on a $\delta(\gamma_v)=0$.\\

En r\'esumant, pour toute place $v$ de $K$, $\gamma_v$ se rel\`eve en un \'el\'ement $\beta_v\in H^1(K_v,G)$ et donc $\delta(\gamma_v)=0$. De
plus, pour toute place $v\not\in S$ telle que $\beta_v$ est ramifi\'e, on a $v\in P$ et la classe $\beta_v$ est totalement ramifi\'ee et cyclique.
Or, puisque $A$ est d\'eploy\'e par $K'$ et que $c$ est un cocycle que l'on peut supposer d\'efini sur
$K''/K$, on sait que le $K$-groupe $\asd{}{c}{}{}{A}$ est d\'eploy\'e par l'extension $K''/K$, laquelle ne contient pas $\zeta_\ell$ pour tout
$\ell$ divisant $n$. Le corollaire \ref{corollaire nullite du sha2 pour les groupes abeliens} nous dit alors que $\Sha^2(K,\asd{}{c}{}{}{A})=0$,
donc $\delta(\gamma)=0$ et alors $\gamma$ provient d'un \'el\'ement $\beta_0\in H^1(K,G)$.

\paragraph*{\'Etape 3 : Correction de l'\'el\'ement global} Puisque $\beta_0$ s'envoie sur $\gamma$, on sait que pour toute place $v$ on a que
$\beta_{0,v}$ et $\beta_v$ ont la m\^eme image dans $H^1(K_v,H)$. Fixons alors un cocycle $b_0$ repr\'esentant $\beta_0$ et tordons le diagramme
\eqref{equation diagramme approximation faible} par
$b_0$. Pour un \'el\'ement $\xi$ dans un ensemble de cohomologie donn\'e du diagramme \eqref{equation diagramme approximation faible}, on note
$\asd{}{b_0}{}{}{\xi}$ son image apr\`es torsion par $b_0$. On a alors que, pour toute place $v$ de $K$, $\asd{}{b_0}{}{v}{\gamma}$ est trivial,
d'o\`u l'on d\'eduit que $\asd{}{b_0}{}{v}{\beta}$ provient d'un \'el\'ement $\alpha'_{v}\in H^1(K_v,\asd{}{b_0}{}{}{A})$.

On remarque que,
quitte \`a changer $b_0$ par un cocycle cohomologue, on peut supposer qu'il rel\`eve le cocycle $c\in Z^1(K''/K,H)$ repr\'esentant $\gamma$.
Ceci ne veut pas dire que l'on a $b_0\in Z^1(K''/K,G)$, mais au moins on sait que la restriction de $b_0$ \`a $\Gamma_{K''}\subset\Gamma_K$
correspond \`a un morphisme (car $K''$ d\'eploie $G$) \`a valeurs dans le $\ell$-groupe $A$. Ceci nous dit que, si l'on note $K'''/K$
l'extension galoisienne minimale d\'eployant $b_0$ (i.e. celle correspondant au sous-groupe de $\Gamma_K$ dont l'image par $b_0$ est $1$), on a
que le degr\'e de $K'''/K''$ est une puissance de $\ell$ car le groupe $\Gamma_{K'''/K''}$ est alors isomorphe \`a un sous-groupe de $A$. On en
d\'eduit que $K'''$ ne contient pas $\zeta_\ell$ car le degr\'e de $K''(\zeta_\ell)/K''$ divise $\ell-1$ et $\zeta_\ell\not\in K''$.
Il est par ailleurs \'evident que $\asd{}{b_0}{}{}{A}$ est un $\Gamma_K$-module d\'eploy\'e par $K'''$.

Soit alors $S'$ la r\'eunion de $S$ avec l'ensemble des places $v$ telles que $\beta_{0,v}$ est ramifi\'e et les places ramifi\'ees pour l'extension
$K'''/K$. L'hypoth\`ese de r\'ecurrence appliqu\'ee au $K$-groupe $\asd{}{b_0}{}{}{A}$ d\'eploy\'e par $K'''$ nous dit alors
que, si l'on note $P'\subset P$ l'ensemble des places dans $P$ totalement d\'ecompos\'ees pour l'extension $L':=LK'''/K$ et n'appartenant pas
\`a $S'$, il existe une classe $\alpha_1\in H^1(K,\asd{}{b_0}{}{}{A})$ telle que
\begin{itemize}
\item pour toute place $v\in S'$, on a $\alpha_{1,v}=\alpha'_{v}$ ;
\item pour toute place $v\not\in S'$, $\alpha_{1,v}$ est cyclique et, si $\alpha_{1,v}$ est ramifi\'ee, alors elle est totalement ramifi\'ee et $v\in P'$.
\end{itemize}
On remarque maintenant que, pour les places $v\in P'$, le cocycle $b_{0,v}$ est trivial. En effet, puisque les places $v\in P'$ sont
totalement d\'ecompos\'ees pour l'extension $K'''/K$, on a $\Gamma_{K_v}\subset\Gamma_{K'''}$ et $b_0$ s'annule en $\Gamma_{K'''}$. On en d\'eduit alors
que $\asd{}{b_0}{}{}{G}$ et $G$ sont isomorphes en tant que $\Gamma_{K_v}$-groupes et que la bijection donn\'e par la torsion par $b_0$ se
r\'eduit \`a l'identit\'e sur $H^1(K_v,G)$, et il en va de m\^eme donc pour $\asd{}{b_0}{}{}{A}$ et $A$.\\

Notons $\beta_1$ l'image de $\alpha_1$ dans $H^1(K,\asd{}{b_0}{}{}{G})$. Il suffit alors pour conclure de d\'efinir l'\'el\'ement $\alpha\in H^1(K,G)$
comme la pr\'eimage de $\beta_1$ par la bijection donn\'ee par la torsion par $b_0$ (i.e. $\asd{}{b_0}{}{}{\alpha}=\beta_1$). Cet \'el\'ement v\'erifie :
\begin{itemize}
\item pour toute place $v\in S'$, on a que $\alpha_v=\beta_v$ car $\asd{}{b_0}{}{v}{\alpha}=\beta_{1,v}$ est l'image de
$\alpha_{1,v}=\alpha'_{v}\in H^1(K_v,\asd{}{b_0}{}{}{A})$ et $\alpha'_{v}$ s'envoie en $\asd{}{b_0}{}{v}{\beta}\in H^1(K_v,\asd{}{b_0}{}{}{G})$ ;
\item pour toute place $v\not\in S'\cup P'$ on a que $\alpha_v\in H^1(K_v^\nr/K,G)$ : en effet, il suffit de remarquer que l'on a
$\asd{}{b_0}{}{v}{\alpha}=\beta_{1,v}\in H^1(K_v^\nr/K,\asd{}{b_0}{}{}{G})$ car il provient de $\alpha_{1,v}\in H^1(K_v^\nr/K,\asd{}{b_0}{}{}{A})$,
et que l'on a tordu par un cocycle repr\'esentant la classe $\beta_{0,v}\in H^1(K_v^\nr/K,G)$ (on rappelle que $S'$ contient par d\'efinition les places
o\`u $\beta_0$ ramifie et que la torsion est compatible avec les morphismes d'inflation) ;
\item pour toute place $v\in P'$, $\alpha_v$ correspond \`a l'image de
$\alpha_{1,v}\in H^1(K_v,\asd{}{b_0}{}{}{A})=H^1(K_v,A)$ car la torsion est r\'eduite \`a l'identit\'e pour ces places ;
\end{itemize}
d'o\`u l'on voit que l'\'el\'ement $\alpha$ convient, car on a alors
\begin{itemize}
\item pour toute place $v\in S$, elle est contenue dans $S'$ et alors $\alpha_v=\beta_v$.
\item pour toute place $v\not\in S$, si $\alpha_v$ ramifie, alors elle est dans $S'\cup P'$, donc soit on a $v\in S'$,
auquel cas on a aussi $\alpha_v=\beta_v$ et alors on a bien $v\in P$ et que $\alpha_v$ est totalement ramifi\'ee et cyclique d'apr\`es
la construction des $\beta_v$ ; soit on a $v\in P'\subset P$ et alors $\alpha_v$ correspond \`a l'image de la classe $\alpha_{1,v}$,
laquelle est bien totalement ramifi\'ee et cyclique.
\end{itemize}
\end{proof}

\section{Principe de Hasse pour des stabilisateurs r\'esolubles}\label{section PH}
Dans cette section on a besoin de la notion de $K$-lien, ou plut\^ot de $\Gamma_K$-lien. On rappelle alors ici sa d\'efinition et quelques propri\'et\'es.

Soient $G$ un groupe abstrait et $\Gamma$ un groupe profini. On note $\aut(G)$ (resp. $\int(G)$) le groupe des automorphismes de $G$
(resp. des automorphismes int\'erieurs). On sait que $\int(G)$ est distingu\'e dans $\aut(G)$. On note $\out(G):=\aut(G)/\int(G)$ et $\pi:\aut(G)\to\out(G)$
la projection canonique. On munit ces trois groupes de la topologie discr\`ete.

\begin{defi}\label{definition g-lien}
Un $\Gamma$-lien sur $G$ est un homomorphisme $\kappa:\Gamma\to\out(G)$ tel qu'il existe une application continue $\phi:\Gamma\to\aut(G)$ relevant $\kappa$, i.e.
tel que $\kappa=\pi\circ\phi$.

S'il existe un tel $\phi$ qui soit en plus un homomorphisme de groupes (ou en d'autres termes, si $G$ est un $\Gamma$-groupe via $\phi$), le $\Gamma$-lien
$\kappa$ est dit trivial.
\end{defi}

\begin{rem}
Lorsque $G$ est fini, tout homomorphisme continu $\kappa:\Gamma\to\out(G)$ admet clairement une telle application $\phi$ le relevant, d'o\`u l'on voit que la
donn\'ee d'un $\Gamma$-lien sur $G$ correspond \`a la donn\'ee du morphisme $\kappa$.
\end{rem}

Pour un $\Gamma$-lien $L=(G,\kappa)$, on peut construire via des cocycles (cf. \cite[1.14]{SpringerH2}) un ensemble de $2$-cohomologie $H^2(\Gamma,L)$.
Cet ensemble classifie les extensions de groupes
\[1\to G\to E\to\Gamma\to 1,\]
telles que l'action ext\'erieure de $\Gamma$ sur $G$ induite par conjugaison dans $E$ correspond au morphisme $\kappa:\Gamma\to\out(G)$ (cf. \cite[1.15]{SpringerH2}).
Dans cet ensemble, on distingue un sous-ensemble, not\'e $N^2(\Gamma,L)$ et dit des classes \emph{neutres}, form\'e des classes correspondant \`a des extensions
scind\'ees. On voit imm\'ediatement que ces classes induisent (au moins) une structure de $\Gamma$-groupe sur $G$ tout simplement par conjugaison dans $E$ via un
scindage de la suite.\\

Soit maintenant $X$ un $K$-espace homog\`ene sous un groupe $G'$ semi-simple simplement connexe, soit $x\in X(\bar K)$ et soit $\bar G$ le stabilisateur de $x$,
que l'on suppose fini et r\'esoluble d'exposant $n$ premier \`a la caract\'eristique de $K$. Suivant Springer \cite[1.20]{SpringerH2}, on peut associer
\`a $(X,x)$ une structure de $\Gamma_K$-lien $L=(\bar G(\bar K),\kappa)$ sur $\bar G(\bar K)$, ainsi qu'une classe $\eta(X)\in H^2(K,L):=H^2(\Gamma_K,L)$.
Toujours d'apr\`es Springer \cite[1.20, 1.27]{SpringerH2} (cf. aussi \cite[\S 7.7]{Borovoi93}), on sait que la classe $\eta(X)\in H^2(K,L)$ est neutre si et
seulement s'il existe un espace principal homog\`ene $P$ sous $G'$ qui est au-dessus de $X$, i.e. qui est muni d'un $K$-morphisme d'espaces homog\`enes
$P\to X$.

\begin{pro}\label{proposition K-point entraine neutre}
Soit $X$ comme ci-dessus et supposons que $X(K)\neq \emptyset$. Alors $\eta(X)\in N^2(K,L)$.
\end{pro}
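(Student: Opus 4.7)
Le plan est d'invoquer directement le crit\`ere de neutralit\'e rappel\'e juste avant l'\'enonc\'e, d\^u \`a Springer : la classe $\eta(X)\in H^2(K,L)$ est neutre si et seulement s'il existe un espace principal homog\`ene $P$ sous $G'$ muni d'un $K$-morphisme d'espaces homog\`enes $P\to X$. Il suffira donc de produire un tel $P$ \`a partir de l'hypoth\`ese $X(K)\neq\emptyset$.

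Pour cela, je fixerai un $K$-point $x_0\in X(K)$ fourni par l'hypoth\`ese, et je prendrai pour $P$ l'espace principal homog\`ene trivial sous $G'$, c'est-\`a-dire $G'$ lui-m\^eme muni de son action naturelle par translation \`a gauche. Le morphisme $P\to X$ sera alors le morphisme orbital $\pi:G'\to X$, $g\mapsto g\cdot x_0$. Comme $x_0$ est $K$-rationnel et comme l'action de $G'$ sur $X$ est d\'efinie sur $K$, le morphisme $\pi$ est automatiquement d\'efini sur $K$ et $G'$-\'equivariant, donc constitue bien un $K$-morphisme d'espaces homog\`enes sous $G'$. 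Le crit\`ere de Springer permettra alors de conclure imm\'ediatement que $\eta(X)\in N^2(K,L)$.

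Je n'anticipe pas d'obstacle s\'erieux : le r\'esultat est essentiellement une cons\'equence formelle du crit\`ere rappel\'e, lui-m\^eme de nature purement alg\'ebrique. La seule v\'erification en arri\`ere-plan concerne la compatibilit\'e de la construction avec la structure de lien $L=(\bar G,\kappa)$ associ\'ee au couple $(X,x)$, mais elle est inscrite dans la construction m\^eme de Springer (\cite[1.20, 1.27]{SpringerH2}) : l'existence du $K$-point $x_0$ fournit une identification $X\cong\mathrm{Stab}_{G'}(x_0)\backslash G'$, o\`u $\mathrm{Stab}_{G'}(x_0)$ est un $K$-sous-groupe de $G'$, et cette pr\'esentation permet de repr\'esenter $\eta(X)$ par une extension scind\'ee de $\Gamma_K$ par $\bar G$, ce qui est exactement la d\'efinition d'une classe neutre.
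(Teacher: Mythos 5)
Votre preuve est correcte et suit essentiellement le m\^eme argument que celle du texte : on prend pour $P$ l'espace principal homog\`ene trivial $G'$ et on utilise le morphisme orbital associ\'e au $K$-point, puis le crit\`ere de Springer. La seule diff\'erence est une convention gauche/droite sans importance (le texte \'ecrit $X$ avec action \`a droite et envoie $g$ sur $x\cdot g$), et votre derni\`ere remarque sur l'extension scind\'ee est un compl\'ement superflu mais exact.
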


\begin{proof}
Soit $x\in X(K)$ et soit $P:=G'$ l'espace principal homog\`ene trivial sous $G'$, i.e. l'action de $G'$ sur $P$ \'etant la multiplication \`a droite. On
d\'efinit un $K$-morphisme $P\to X$ en envoyant $g\in P=G'$ dans $x\cdot g\in X$. Il est clair que ce morphisme est \'equivariant avec la $K$-action de $G'$
sur $P$. Le morphisme ainsi d\'efini est alors un morphisme d'espaces homog\`enes, ce qui nous dit que $\eta(X)\in N^2(K,L)$ d'apr\`es ce qui a \'et\'e dit
ci-dessus.
\end{proof}

Si l'on note $\bar Z$ le centre de $\bar G$, on voit clairement que l'on a un morphisme canoniquement induit $\aut(\bar G(\bar K))\to\aut(\bar Z(\bar K))$ qui
passe au quotient en un morphisme $\out(\bar G(\bar K))\to\aut(\bar Z(\bar K))$. Le morphisme $\kappa$ induit alors par composition une structure de
$\Gamma_K$-module sur $\bar Z(\bar K)$, i.e. une $K$-forme $Z$ de $\bar Z$ puisqu'il s'agit d'un groupe fini (cf. aussi \cite[1.16]{SpringerH2}). On peut montrer
facilement que l'ensemble $H^2(K,L)$, d\`es qu'il est non vide, admet une structure d'espace principal homog\`ene sous le groupe $H^2(K,Z)$
(cf. \cite[1.17]{SpringerH2}). De m\^eme, on peut montrer que, d\`es que le sous-ensemble $N^2(K,L)$ est non vide, donc d\`es que le groupe $\bar G(\bar K)$ admet
une structure de $\Gamma_K$-groupe, ou encore, d\`es que $\bar G$ admet une $K$-forme $G$, le sous-ensemble $N^2(K,L)$ peut \^etre d\'ecrit par cette action de
la fa\c con suivante :

\begin{pro}\label{proposition classes neutres}
Soient $\bar G$, $\kappa$, $L$, $\bar Z$ et $Z$ comme ci-dessus. Soient $\eta\in N^2(K,L)$, $\alpha\in H^2(K,L)$ et $\xi\in H^2(K,Z)$ le seul \'el\'ement tel que
$\alpha=\xi\cdot\eta$. Soit enfin $G$ une $K$-forme de $\bar G$ induite par $\eta$. Alors $\alpha\in N^2(K,L)$ si et seulement si $\xi$ appartient \`a l'image
de l'application $H^1(K,G/Z)\xrightarrow{\delta}H^2(K,Z)$ induite par la suite exacte
\[1\to Z\to G\to G/Z \to 1.\]
\end{pro}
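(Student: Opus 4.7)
Le plan consiste à travailler explicitement au niveau des cocycles suivant le formalisme de Springer. Puisque $\eta \in N^2(K,L)$, je commencerais par choisir un représentant privilégié de $\eta$ de la forme $(\phi_\eta, 1)$, où $\phi_\eta : \Gamma_K \to \aut(\bar G(\bar K))$ est un homomorphisme (scindage de l'extension associée), qui n'est autre que l'action galoisienne correspondant à la $K$-forme $G$ de $\bar G$. L'action de $H^2(K,Z)$ sur $H^2(K,L)$ se traduisant au niveau cocyclique par la multiplication dans la seconde entrée (le cocycle central étant à valeurs dans $Z$ avec la $\Gamma_K$-action induite par $\kappa$), la classe $\alpha = \xi\cdot\eta$ se représente alors par $(\phi_\eta, z_\xi)$ avec $z_\xi \in Z^2(K,Z)$ un cocycle représentant $\xi$.

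Je traduirais ensuite la neutralité de $\alpha$ en termes cocycliques. Deux représentants $(\phi,f)$ et $(\phi',f')$ sont cohomologues via une 1-cochaîne $\psi : \Gamma_K \to \bar G(\bar K)$ agissant par $\phi'(\sigma) = \int(\psi(\sigma))\circ\phi(\sigma)$ et $f'(\sigma,\tau) = \psi(\sigma)\,\phi(\sigma)(\psi(\tau))\,f(\sigma,\tau)\,\psi(\sigma\tau)^{-1}$. La neutralité de $\alpha$ équivaut donc à l'existence d'une telle $\psi$ transformant $(\phi_\eta, z_\xi)$ en un couple $(\phi', 1)$ avec $\phi'$ un homomorphisme. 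L'équation $f' = 1$ exprime exactement que $z_\xi(\sigma,\tau)$ est le cobord non abélien de $\psi$ relativement à $\phi_\eta$.

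Enfin, j'exploiterais cette équation via $G/Z$. Puisque $z_\xi$ est à valeurs centrales, l'image $\bar\psi$ de $\psi$ dans $(G/Z)(\bar K)$ vérifie la relation de 1-cocycle pour la $\Gamma_K$-action induite par $\phi_\eta$, fournissant une classe $[\bar\psi] \in H^1(K, G/Z)$ ; et par définition même du morphisme de bord associé à $1\to Z \to G \to G/Z \to 1$, on a $\delta[\bar\psi] = \xi$, d'où le sens direct. Réciproquement, si $\xi \in \im(\delta)$, on choisit un cocycle $\bar\psi \in Z^1(K,G/Z)$ avec $\delta[\bar\psi] = \xi$, on le relève en une cochaîne $\psi : \Gamma_K \to G(\bar K)$, et la formule précédente fournit un cocycle à valeurs dans $Z$ qui est cohomologue à $z_\xi$ ; quitte à modifier $\psi$ par un cobord dans $Z^1(K,Z)$ (absorbé dans le choix du relèvement), on peut rendre ce cocycle égal à $z_\xi$, ce qui fournit le représentant $(\phi', 1)$ souhaité.

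La principale difficulté sera d'ordre technique : manipuler rigoureusement les formules cocycliques non abéliennes (ordres de produit, conventions de l'action de $\psi$), et vérifier que les trois conventions intervenant dans l'argument — l'action de $H^2(K,Z)$ sur $H^2(K,L)$, la relation d'équivalence sur les couples $(\phi,f)$, et la formule explicite pour $\delta$ — sont bien cohérentes entre elles, ce qui permettra d'identifier précisément la classe $\delta[\bar\psi]$ avec $\xi$ sans ambiguïté de signe.
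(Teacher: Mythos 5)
Your argument is correct, but note that the paper does not actually carry out this computation: its proof of the proposition is a one-line citation of Proposition 2.3 of \cite{Borovoi93}, together with the remark that Borovoi's argument goes through unchanged in characteristic $p$ for finite groups of order prime to $p$. What you propose is essentially a self-contained reconstruction of that cited proof in Springer's cocycle formalism: represent $\eta$ by $(\phi_\eta,1)$ with $\phi_\eta$ a homomorphism, hence $\alpha$ by $(\phi_\eta,z_\xi)$; translate neutrality into the existence of a $1$-cochain $\psi$ killing the second component; and use the centrality of $z_\xi$ to see that $\bar\psi$ is a cocycle with values in $G/Z$ satisfying $\delta[\bar\psi]=\xi$. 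The two points you flag as delicate are indeed the only ones, and both work out: once $f'=1$, the compatibility relation $\phi'(\sigma)\phi'(\tau)=\int(f'(\sigma,\tau))\,\phi'(\sigma\tau)$ forces $\phi'$ to be a homomorphism automatically, so neutrality really is equivalent to trivializing the second component; and in the converse direction, the discrepancy between $z_\xi$ and the non-abelian coboundary of a chosen lift of $\bar\psi$ is the coboundary of a $1$-cochain with values in the \emph{center}, so multiplying $\psi$ by it changes neither $\bar\psi$ nor $\int(\psi)\circ\phi_\eta$. Your approach buys a proof that is visibly independent of the characteristic (it only uses that $\bar G$ is finite of order prime to $p$, so that liens and $K$-forms are governed by $\Gamma_K$-actions on $\bar G(\bar K)$), which is precisely the point the paper has to argue informally when importing Borovoi's characteristic-zero statement; the price is the careful bookkeeping of conventions you mention, which is genuine but routine.
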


On remarquera que la $K$-forme $Z$ de $\bar Z$ est bien un $K$-sous-groupe de la $K$-forme $G$ de $\bar G$. Cela se d\'eduit en effet imm\'ediatement de la
fa\c con dont on a obtenu ces $K$-formes.

\begin{proof}
C'est la proposition 2.3 de \cite{Borovoi93}, \`a cela pr\`es que Borovoi se place sur un corps $K$ de caract\'eristique 0, mais sa d\'emonstration marche en fait
sans aucun changement en caract\'eristique $p$ pour des groupes finis d'ordre premier \`a $p$.
\end{proof}

\begin{rem}
On notera que la notion de $K$-lien, un peu plus technique que celle de $\Gamma_K$-lien et d\'efinie via le groupe de morphismes semi-alg\'ebriques $\saut(\bar G)$ de
$\bar G$, n'est pas n\'ecessaire dans notre cas car on a affaire \`a des groupes finis d'ordre premier \`a la caract\'eristique. En effet, un tel $\bar K$-groupe fini
$\bar G$ admet toujours une $K$-forme canonique (\`a savoir, celle de $K$-groupe constant) et cette forme nous dit que le groupe $\saut(\bar G)$ est dans ce cas
isomorphe au produit direct $\aut(\bar G(\bar K))\times\Gamma_K$ et il en va de m\^eme pour les automorphismes ext\'erieurs. On a alors une \'equivalence entre
$K$-liens sur $\bar G$ et $\Gamma_K$-liens sur $\bar G(\bar K)$.
\end{rem}

Tous ces rappels \'etant faits, on peut passer au r\'esultat principal de cette section, qui est le suivant.

\begin{thm}\label{theoreme PH pour les groupes resolubles}
Soit $X$ un espace homog\`ene sur $G'$ \`a stabilisateur fini r\'esoluble $\bar G$ d'exposant $n$ comme ci-dessus. Soit $K'/K$ l'extension
correspondant au noyau du morphisme $\kappa:\Gamma_K\to \out(\bar G(\bar K))$ correspondant \`a $(X,x)$. On suppose que $(m(K'),n)=1$, o\`u $m(K')$ est
le cardinal de $\mu(K')$. Alors, si $X(K_v)\neq\emptyset$ pour toute place $v\in\Omega_K$, la vari\'et\'e $X$ admet un $K$-point, i.e.
$X(K)\neq\emptyset$.
\end{thm}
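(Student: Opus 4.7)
Le plan est de r\'eduire le probl\`eme \`a la neutralit\'e globale de la classe de Springer $\eta(X)\in H^2(K,L)$. En effet, si l'on parvient \`a \'etablir $\eta(X)\in N^2(K,L)$, on disposera d'un espace principal homog\`ene $P$ sous $G'$ muni d'un $K$-morphisme $P\to X$, et le principe de Hasse \eqref{equation PH SS SC} pour $G'$ semi-simple simplement connexe ne demandera plus que des $K_v$-points de $P$ aux places archim\'ediennes r\'eelles (on ajustera le choix de $P$ parmi les neutralisations possibles de $\eta(X)$ pour garantir cela, l'existence locale de $K_v$-points de $X$ rendant l'ajustement accessible). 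L'hypoth\`ese $X(K_v)\neq\emptyset$ pour toute place $v$, conjugu\'ee \`a la proposition \ref{proposition K-point entraine neutre} appliqu\'ee \`a chaque compl\'et\'e $K_v$, fournit la neutralit\'e locale $\eta(X)_v\in N^2(K_v,L_v)$.

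Il faudra alors construire une $K$-forme $G$ de $\bar G$ induisant le lien $\kappa$, c'est-\`a-dire exhiber un \'el\'ement de $N^2(K,L)$. C'est un probl\`eme de plongement que je r\'esoudrais par d\'evissage le long de la s\'erie d\'eriv\'ee de $\bar G$, en invoquant \`a chaque pas les r\'esultats de la section \ref{section AF modules simples}. Une telle $K$-forme $G$ \'etant obtenue, de centre $Z$ (un $K$-sous-groupe ab\'elien de $G$ \'egalement d\'eploy\'e par $K'$) et de classe neutre associ\'ee $\eta_0\in N^2(K,L)$, la structure d'espace principal homog\`ene de $H^2(K,L)$ sous $H^2(K,Z)$ permet d'\'ecrire $\eta(X)=\xi\cdot\eta_0$ pour un unique $\xi\in H^2(K,Z)$. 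D'apr\`es la proposition \ref{proposition classes neutres}, il suffira alors de montrer que $\xi$ appartient \`a l'image de l'application de connexion $\delta:H^1(K,G/Z)\to H^2(K,Z)$ issue de $1\to Z\to G\to G/Z\to 1$.

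\`A cette fin, la proposition \ref{proposition classes neutres} appliqu\'ee localement, combin\'ee \`a la neutralit\'e de $\eta(X)_v$, fournira pour chaque $v$ une pr\'eimage $\alpha_v\in H^1(K_v,G/Z)$ de $\xi_v$. Je choisirai un ensemble fini $S$ de places contenant les places archim\'ediennes, les diviseurs de $n$, celles ramifi\'ees pour $K'/K$ et les mauvaises places de $\xi$, et j'appliquerai le th\'eor\`eme \ref{theoreme AF pour les groupes resolubles} au $K$-groupe fini r\'esoluble $G/Z$ (d'exposant divisant $n$, d\'eploy\'e par $K'$, de sorte que la condition $(m(K'),n)=1$ reste satisfaite). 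Cela produira une classe globale $\alpha\in H^1(K,G/Z)$ dont les localisations en $S$ co\"\i ncident avec les $\alpha_v$ prescrits, et dont la ramification en dehors de $S$ est cyclique, totalement ramifi\'ee et concentr\'ee en des places totalement d\'ecompos\'ees pour une extension cyclotomique $L/K'$ contenant $\zeta_n$. La classe $\xi-\delta(\alpha)\in H^2(K,Z)$ est nulle en $S$ par construction ; il restera \`a v\'erifier qu'elle est locale\-ment triviale aux places restantes, ce qui la placera dans $\Sha^2(K,Z)$. Comme $Z$ est d\'eploy\'e par $K'$ avec $(m(K'),n)=1$, le corollaire \ref{corollaire nullite du sha2 pour les groupes abeliens} donnera $\Sha^2(K,Z)=0$, d'o\`u $\xi=\delta(\alpha)$, et donc $\eta(X)\in N^2(K,L)$.

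Le principal obstacle me semble \^etre double : d'une part le probl\`eme de plongement non ab\'elien initial (pour exhiber une $K$-forme globale r\'ealisant le lien $\kappa$), d'autre part, et surtout, la v\'erification que $\xi-\delta(\alpha)$ s'annule effectivement aux places en dehors de $S$. C'est pr\'ecis\'ement pour ce dernier point que le contr\^ole tr\`es fin de la ramification fourni par le th\'eor\`eme \ref{theoreme AF pour les groupes resolubles} (classes cycliques totalement ramifi\'ees en des places totalement d\'ecompos\'ees pour $L/K$) sera indispensable : sans cette pr\'ecision suppl\'ementaire par rapport \`a la simple approximation faible, les contributions ramifi\'ees de $\delta(\alpha)$ ne pourraient pas \^etre confront\'ees \`a $\xi$ de mani\`ere contr\^ol\'ee, et c'est donc cette am\'elioration qui rend possible le passage du local au global via l'annulation de $\Sha^2(K,Z)$.
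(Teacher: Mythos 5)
Votre plan suit pour l'essentiel la m\^eme route que le texte : r\'eduction \`a la neutralit\'e de $\eta(X)$, construction d'une classe neutre globale $\eta_0$ (donc d'une $K$-forme $G_0$ de $\bar G$) par r\'esolution d'un probl\`eme de plongement \`a noyau r\'esoluble, \'ecriture $\eta(X)=\xi\cdot\eta_0$, rel\`evement local de $\xi$ en des classes de $H^1(K_v,G_0/Z)$ aux mauvaises places, application du th\'eor\`eme \ref{theoreme AF pour les groupes resolubles} \`a $G_0/Z$, puis conclusion via le corollaire \ref{corollaire nullite du sha2 pour les groupes abeliens} et la proposition \ref{proposition classes neutres}. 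Vous identifiez correctement le point d\'elicat final, \`a savoir l'annulation de $\xi_v-\delta(\alpha)_v$ hors de $S$, qui repose sur le contr\^ole fin de la ramification et, aux places de $P$, sur le lemme \ref{lemme de relevement local}.

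Il y a cependant une lacune r\'eelle : vous affirmez que $G/Z$ est \og d\'eploy\'e par $K'$, de sorte que la condition $(m(K'),n)=1$ reste satisfaite \fg. C'est faux en g\'en\'eral. Le corps $K'$ ne trivialise que l'action \emph{ext\'erieure} $\kappa$ ; la $K$-forme $G_0$ provient d'un rel\`evement $f_0:\Gamma_K\to\aut(\bar G(\bar K))$ de $\kappa$ dont la restriction \`a $\Gamma_{K'}$ prend ses valeurs dans $\int(\bar G(\bar K))$ sans \^etre triviale en g\'en\'eral (l'extension $1\to\int(\bar G(\bar K))\to E\to\im(\kappa)\to 1$ n'a aucune raison d'\^etre scind\'ee), et l'action induite sur $G_0/Z\cong\int(\bar G(\bar K))$ est une conjugaison non triviale. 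Le corps d\'eployant $G_0$ et $G_0/Z$ est donc l'extension $K''$ fix\'ee par $\ker f_0$, qui peut \^etre strictement plus grande que $K'$, et rien dans votre construction n'emp\^eche $K''$ de contenir un $\zeta_\ell$ avec $\ell\mid n$, ce qui interdirait d'appliquer le th\'eor\`eme \ref{theoreme AF pour les groupes resolubles} \`a $G_0/Z$. C'est pr\'ecis\'ement pour parer \`a cela que la preuve du texte comporte une \'etape suppl\'ementaire : le th\'eor\`eme de plongement de Neukirch (le \og Main Theorem \fg{} de \cite{NeukirchSolvable}, cf. \cite[9.5.5]{NSW}) permet de \emph{prescrire} les solutions locales en un nombre fini de places, et l'on impose $f_{v_i}=1$ en des places auxiliaires $v_i$ totalement d\'ecompos\'ees dans $K'/K$ mais inertes dans $K'(\zeta_{\ell_i})/K'$, fournies par \v Cebotarev ; comme dans le lemme \ref{lemme controle des cocycles qui relevent}, cela force $\zeta_{\ell_i}\not\in K''$ pour tout $\ell_i\mid n$. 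Votre esquisse du probl\`eme de plongement \og par d\'evissage en invoquant la section \ref{section AF modules simples} \fg{} passe \`a c\^ot\'e de cette prescription locale, qui est indispensable. Enfin, le passage de la neutralit\'e de $\eta(X)$ \`a $X(K)\neq\emptyset$ m\'erite plus qu'un \og ajustement du choix de $P$ \fg{} aux places r\'eelles : le texte utilise que $n$ est impair pour obtenir $H^1(K_v,G_0)=1$ en toute place archim\'edienne, puis suit \cite[Lemma 7.10]{Borovoi93}.
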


\begin{rem}
Si le lien $L$ (et par cons\'equent le morphisme $\kappa$) d\'epend du choix du $\bar K$-point $x$, l'extension $K'$ en est en fait
totalement ind\'ependante. Cela peut se voir facilement \`a partir de la construction de $\kappa$ dans \cite[1.20]{SpringerH2}.
\end{rem}

La d\'emonstration de ce ``principe de Hasse'' utilise de fa\c con cruciale le th\'eor\`eme \ref{theoreme AF pour les groupes resolubles}.

\begin{proof}
On montrera d'abord la neutralit\'e de $\eta(X)$, ce qu'on fera en trois \'etapes.

\paragraph*{\'Etape 1 : Existence d'une classe neutre dans $H^2(K,L)$} Si l'on suppose que $X(K_v)\neq\emptyset$ pour toute place $v$ de $K$, la proposition
\ref{proposition K-point entraine neutre} nous dit que $\eta(X_v)=\eta(X)_v\in H^2(K_v,L)$ est neutre pour toute place $v$ de $K$, o\`u $X_v=X\times_K K_v$.

On rappelle maintenant que l'on a la suite exacte
\[1\to \int (\bar G(\bar K)) \to \aut (\bar G(\bar K)) \xrightarrow{\pi} \out (\bar G(\bar K)) \to 1, \]
et que la neutralit\'e de $\eta(X_v)$ correspond \`a l'existence d'un morphisme $f_v:\Gamma_{K_v}\to \aut(\bar G(\bar K))$ relevant
$\kappa_v:\Gamma_{K_v}\to\out(\bar G(\bar K))$, o\`u $\kappa_v$ est bien entendu la restriction \'evidente de
$\kappa:\Gamma_K\to\out(\bar G(\bar K))$. Les hypoth\`eses faites sur $K'$ et le fait que $\int(\bar G(\bar K))$ soit un groupe r\'esoluble (en effet,
il est isomorphe \`a $\bar G(\bar K)/\bar Z(\bar K)$) nous permettent d'utiliser un th\'eor\`eme de Neukirch, \`a savoir le ``Main Theorem'' de
\cite{NeukirchSolvable}, lequel est valable aussi en caract\'eristique positive, cf. \cite[9.5.5]{NSW}.\footnote{Pour \^etre plus pr\'ecis, on utilise ce
th\'eor\`eme pour la suite exacte \[1\to\int (\bar G(\bar K))\to E\to\im(\kappa)\to 1,\] o\`u $E$ d\'esigne la pr\'eimage de $\im(\kappa)$ par
$\pi:\aut(\bar G(\bar K))\to \out(\bar G(\bar K))$.}
Ce th\'eor\`eme assure l'existence d'un morphisme $f_0:\Gamma_K\to \aut(\bar G(\bar K))$ relevant $\kappa$ et induisant $f_v$ pour un sous-ensemble
fini $S\subset\Omega_K$ que l'on peut prendre \`a discr\'etion. Ce morphisme correspond \`a une classe neutre $\eta_0$ de $H^2(K,L)$ et d\'efinit
une $K$-forme $G_0$ de $\bar G$.

\paragraph*{\'Etape 2 : Choix d'une ``bonne'' classe neutre dans $H^2(K,L)$} Il s'agit maintenant de faire un bon choix de l'ensemble $S$ et des
rel\`evements $f_v$ de $\kappa_v$ pour les places $v\in S$ pour avoir un certain contr\^ole sur la $K$-forme $G_0$, de fa\c con que l'on puisse
appliquer par la suite le th\'eor\`eme principal d'approximation faible (th\'eor\`eme \ref{theoreme AF pour les groupes resolubles}). La d\'emarche
est la m\^eme que dans le lemme \ref{lemme controle des cocycles qui relevent} : consid\`erons, pour
chaque nombre premier $\ell_i$ divisant $n$, une place $v_i$ totalement d\'ecompos\'ee pour l'extension $K'/K$ et inerte pour l'extension
$L'_i=K'(\zeta_{\ell_i})/K'$. On voit alors que $\kappa_{v_i}$ correspond au morphisme trivial car $\Gamma_{K_{v_i}}\subset\Gamma_{K'}$ et alors on
peut supposer que $f_{v_i}$ est aussi trivial. On d\'efinit $S$ comme l'ensemble des places $v_i$ que l'on vient de consid\'erer. 

Consid\'erons alors le morphisme $f_0:\Gamma_K\to \aut(\bar G(\bar K))$ donn\'e par le th\'eor\`eme de Neukirch pour ce choix particulier de $S$
et des morphismes $f_{v}$ pour $v\in S$ et notons $K''/K$ l'extension galoisienne correspondant au noyau de $f_0$. On voit donc que
$\zeta_{\ell_i}\not\in K''$ pour tout $i$. En effet, de m\^eme que dans le lemme \ref{lemme controle des cocycles qui relevent},
on voit que si $K''$ contenait $\zeta_{\ell_i}$, alors on aurait $L'_i\subset K''$, ou encore $\Gamma_{K''}\subset\Gamma_{L'_i}$, d'o\`u l'on aurait
$\Gamma_{K_{v_i}}\not\subset\Gamma_{K''}$ car $\Gamma_{K_{v_i}}\not\subset\Gamma_{L'_i}$ par hypoth\`ese. De cela on voit enfin que la restriction
$f_{v_i}$ de $f_0$ ne serait pas triviale, contredisant nos hypoth\`eses.

Le morphisme $f_0$ que l'on vient de construire correspond alors \`a une classe neutre $\eta_0$ de $H^2(K,L)$ et d\'efinit une $K$-forme $G_0$ de
$\bar G$ d\'eploy\'ee par une extension $K''/K$ ne contenant pas $\zeta_\ell$ pour tout nombre premier $\ell$ divisant $n$. 

\paragraph*{\'Etape 3 : Application du th\'eor\`eme principal}
En rappelant alors que $H^2(K,L)$ admet une structure d'espace principal homog\`ene sous le groupe $H^2(K,Z)$ (cf. le d\'ebut de la section),
on voit qu'il existe une classe $\xi\in H^2(K,Z)$ telle que $\xi\cdot\eta_0=\eta(X)$. La proposition \ref{proposition classes neutres} nous dit que
la neutralit\'e de $\eta(X)$ est \'equivalente au fait que $\xi$ provienne de $H^1(K,G_0/Z)$. Il s'agit alors de trouver une pr\'e-image de $\xi$ dans cet ensemble.

On sait que pour presque toute place $v$ de $K$ (i.e. sauf pour un nombre fini) on a $\xi_v\in H^2(K_v^\nr/K_v,Z)=0$. Soit alors $S_0$ la r\'eunion
des places archim\'ediennes, des places ramifi\'ees pour l'extension $K''/K$ et de l'ensemble fini des places de $K$ telles que $\xi_v\neq 0$. Pour ces
places on sait, toujours d'apr\`es la proposition \ref{proposition classes neutres}, que $\xi_v$ provient d'un \'el\'ement $\psi_v\in H^1(K_v,G_0/Z)$ car
les $\eta(X_v)$ sont tous neutres.

Le th\'eor\`eme \ref{theoreme AF pour les groupes resolubles} nous dit alors que, si l'on note $P$ l'ensemble des places totalement d\'ecompos\'ees
pour l'extension $L''=K''(\zeta_n)/K$, ne divisant pas $n$ et n'appartenant pas \`a $S_0$, il existe alors une classe $\alpha\in H^1(K,G_0/Z)$
telle que
\begin{itemize}
\item pour toute place $v\in S_0$, $\alpha_{v}=\psi_{v}$ ;
\item pour toute place $v\not\in S_0$, $\alpha_v$ est cyclique et, si $\alpha_v$ est ramifi\'ee, alors elle est totalement ramifi\'ee et $v\in P$.
\end{itemize}
On veut montrer que $\alpha$ rel\`eve $\xi$, ce qui donnera par suite que $\eta(X)$ est une classe neutre. Pour ce faire, on donne le m\^eme argument
que dans l'\'etape 2 de la preuve du th\'eor\`eme \ref{theoreme AF pour les groupes resolubles} : soit $v\not\in S_0$, on a alors la suite exacte
\[H^1(K_v,G_0)\to H^1(K_v,G_0/Z)\to H^2(K_v,Z).\]
Si $\alpha_v$ est non ramifi\'e, on sait que son image dans $H^2(K_v,Z)$ est triviale car $H^2(K_v^\nr/K_v,Z)=0$. Sinon, on sait que $v\in P$, donc en
particulier que $\zeta_n\in K_v$ et que $G_0$ est un $K_v$-groupe constant, et que $\alpha_v$ est cyclique et totalement ramifi\'ee. Le lemme
\ref{lemme de relevement local} nous dit alors qu'il existe une classe $\beta_v\in H^1(K_v,G_0)$ relevant $\alpha_v$, ce qui nous dit que
l'image de $\alpha_v$ dans $H^2(K_v,Z)$ est nulle.

En r\'esumant, on voit que pour toute place $v\not\in S_0$, on a que $\alpha_{v}$ provient d'un \'el\'ement dans $H^1(K_v,G_0)$ et alors que son image
dans $H^2(K_v,Z)$ est triviale, d'o\`u l'on voit que l'image de $\alpha_v$ dans $H^2(K_v,Z)$ est \'egale \`a $\xi_v$ pour \emph{toute} place $v\in\Omega_K$.
Enfin, il est facile de voir que $Z$ est un $K$-groupe d\'eploy\'e par $K'/K$, ce qui nous dit qu'il v\'erifie les hypoth\`eses du corollaire
\ref{corollaire nullite du sha2 pour les groupes abeliens} et on a alors que $\Sha^2(K,Z)=0$, donc $\alpha$ est bien un rel\`evement de $\xi$.

\paragraph*{Fin de la d\'emonstration} La proposition \ref{proposition classes neutres} nous permet alors de conclure que $\eta(X)$ est neutre, prouvant
finalement l'existence d'un espace principal homog\`ene $P$ sous $G'$ au-dessus de $X$. Dans le cas de caract\'eristique positive, la nullit\'e de
$H^1(K,G')$ (cf. \cite{HarderHasseCarp}) nous dit alors que $X$ admet un $K$-point tout simplement en poussant le $K$-point de $P=G'$ correspondant
\`a l'\'el\'ement neutre. Dans le cas de caract\'eristique $0$ il faut remarquer sinon que, puisque $n$ est premier \`a $m(K')$, il est notamment impair.
Il est facile alors de voir par un argument de d\'evissage que, pour toute place archim\'edienne $v$ de $K$, l'ensemble $H^1(K_v,G_0)$ est trivial,
d'o\`u l'application
\[H^1(K,G_0)\to\prod_{v\in\Omega_{\infty}}H^1(K_v,G_0),\]
est trivialement surjective, et il en est de m\^eme pour toute $K$-forme de $\bar G$. Ainsi, il suffit de suivre la d\'emonstration de
\cite[Lemma 7.10]{Borovoi93} en utilisant la surjectivit\'e de l'application ci-dessus et le principe de Hasse pour le $H^1$ des groupes semi-simples
simplement connexes cit\'e dans l'introduction pour d\'emontrer de la m\^eme fa\c con l'existence d'un $K$-point sur $X$, ce qui conclut.
\end{proof}

\paragraph*{Remerciements} Je tiens \`a remercier David Harari et Cyril Demarche pour d'importantes discussions, pour l'int\'er\^et avec lequel ils
ont suivi ce travail et pour avoir aid\'e \`a corriger certaines fautes, ainsi que Martin Orr pour m'avoir aid\'e \`a v\'erifier la validit\'e de
certains raisonnements.

\bibliographystyle{alpha}
\bibliography{AF_PH}

\end{document}